\newtheorem{theorem}{Theorem}[section]
\newtheorem{lemma}[theorem]{Lemma}
\newtheorem{corollary}[theorem]{Corollary}
\newtheorem{proposition}[theorem]{Proposition}
\theoremstyle{definition}
\newtheorem{definition}[theorem]{Definition}
\newtheorem{example}[theorem]{Example}
\theoremstyle{remark}
\newtheorem{remark}[theorem]{Remark}
\DeclareMathOperator{\FV}{FV}
\DeclareMathOperator{\Vars}{Vars}
\newcommand{\numbere}{\mathrm{e}}
\newcommand{\Base}{\mathcal{B}}
\newcommand{\CNF}{\mathcal{C}}
\newcommand{\DNF}{\mathcal{D}}
\newcommand{\ENF}{\mathcal{E}}
\newcommand{\Formula}{\text{Formula}}
\newcommand{\SSigma}[1]{\boldsymbol{\Sigma_{#1}}}
\newcommand{\PPi}[1]{\boldsymbol{\Pi_{#1}}}
\DeclareMathOperator{\QQ}{Q}
\newcommand{\myinfer}[2]{\frac{#2}{#1}}
\newcommand{\plus}{\mathrel{\text{+}}}
\newcommand{\nplus}[2]{{#1} \oplus {#2}}
\newcommand{\nplusone}[2]{{#1} \oplus {#2}}
\newcommand{\ntimes}[2]{{#1} \times {#2}}
\newcommand{\distrib}[2]{{#1}\ltimes {#2}}
\newcommand{\distribn}[2]{{#1}\ltimes {#2}}
\newcommand{\distribex}[2]{{#1}\propto {#2}}
\newcommand{\distribone}[2]{{#1}\rtimes {#2}}
\newcommand{\distribzero}[2]{{#1}\rtimes {#2}}
\newcommand{\explog}[2]{{#1}\Uparrow {#2}}
\newcommand{\qexplogone}[3]{{#1}\uparrow^{#3} {#2}}
\newcommand{\explogone}[2]{{#1}\uparrow {#2}}
\newcommand{\explogzero}[2]{{#1}\uparrow {#2}}
\newcommand{\explogall}[2]{{#1}\Rsh^{#2}}
\newcommand{\enf}[1]{\left\|{#1}\right\|}
\newcommand{\enfpos}[1]{\left|{#1}\right|}
\DeclareMathOperator{\Dest}{dest}
\DeclareMathOperator{\As}{as}
\DeclareMathOperator{\In}{in}
\newcommand{\dest}[4]{\Dest {#1} \As ({#2}.{#3}) \In {#4}}
\newcommand{\wpair}[2]{\langle{#1},{#2}\rangle}
\begin{document}

\author{Taus Brock-Nannestad and Danko Ilik}
\title[An Intuitionistic Formula Hierarchy Based on High-School Identities]{An Intuitionistic Formula Hierarchy\\ Based on High-School Identities}
\begin{abstract}
We revisit the notion of intuitionistic equivalence and formal proof
representations by adopting the view of formulas as exponential
polynomials.
After observing that most of the invertible proof rules of
intuitionistic (minimal) propositional sequent calculi are formula
(i.e.\ sequent) isomorphisms corresponding to the high-school
identities, we show that one can obtain a more compact variant of a
proof system, consisting of non-invertible proof rules only, and where
the invertible proof rules have been replaced by a formula
normalisation procedure.
Moreover, for certain proof systems such as the G4ip sequent calculus
of Vorob'ev, Hudelmaier, and Dyckhoff, it is even possible to see all
of the non-invertible proof rules as strict inequalities between
exponential polynomials; a careful combinatorial treatment is given in
order to establish this fact.
Finally, we extend the exponential polynomial analogy to the
first-order quantifiers, showing that it gives rise to an
intuitionistic hierarchy of formulas, resembling the classical
arithmetical hierarchy, and the first one that classifies formulas
while preserving isomorphism.

%%% Local Variables:
%%% mode: latex
%%% TeX-master: "high-school"
%%% End:

\end{abstract}
\dedicatory{To the memory of Kosta Došen}
\maketitle

%%%%%% Sections of the paper %%%%%%%%%%%%%%%%%%%%%%%%%%%%%%%%%%%%%%%%%
\section{Introduction}
\label{sec:intro}

Classical logic has a standard semantics independent of the notion of
proof. One could for instance do model theory without ever involving
proof systems. However, the intended meaning of the logical
connectives for intuitionistic logic makes its semantics inherently
proof-theoretic. For example, the intuitionistic \emph{validity} of
$F\vee G$ amounts to either \emph{having a proof} of $F$ or a proof of
$G$.

Equivalence of formulas is perhaps also more subtle
intuitionistically. 
  First, whereas in classical first-order logic any formula can be
  characterised as belonging at an appropriate level of the
  arithmetical hierarchy through an equivalent formula in prenex form,
  in intuitionistic logic, the existence of an equally versatile
  hierarchy appears to be elusive (see Section~\ref{sec:conclusion}).
  Second, the usual notion of equivalence denoting implication in
  both directions is not semantics-, that is, proof-preserving. For
  instance, the equivalence $F \wedge F \leftrightarrow F$ only allows
  preserving proofs between the left-hand side and the right-hand side
  for some special cases of $F$, but, when $F$ is a disjunction with
  both disjuncts provable, there would be four different possible
  proofs of the left-hand side and only two different possible proofs
  of the right-hand side.

\emph{Isomorphism} of formulas seems to be a better notion when our
aim is to preserve semantics across an equivalence. This strong notion
of equivalence of formulas,
\[F\cong G,\] asks not only that $F\leftrightarrow G$, but also that
there exist proof transformations
$\phi : ~\vdash F \longrightarrow ~\vdash G$ and
$\psi : ~\vdash G \longrightarrow ~\vdash F$, such that given any
proof $\mathcal{D}_1$ of $F$ and $\mathcal{D}_2$ of $G$, we have that
\[\psi(\phi(\mathcal{D}_1))\equiv\mathcal{D}_1 \quad\text{ and }\quad
  \phi(\psi(\mathcal{D}_2))\equiv\mathcal{D}_2.\] However, adopting
isomorphism as the standard intuitionistic notion of equivalence makes
us stumble upon another fundamental problem: having a good definition
of \emph{identity of proofs}, ``$\equiv$'', is itself open since the
early days of intuitionistic proof theory (see \cite{Dosen2003} and Section~\ref{sec:conclusion}).

For all of these reasons, the study of formal proof systems is perhaps
more pressing for intuitionistic logic and constructive mathematics,
than it is for classical mathematics. And, as constructive reasoning
plays an important role in proof assistant software, these problems
are also directly relevant to formal specification and verification of
programs, not to mention the foundations of popular mathematical
theories with intuitionistic cores, such as type theory and topos
theory.

In this paper, we contribute to the intuitionistic proof theory
related to the aforementioned problems by means of the
perspective of \emph{intuitionistic formulas seen as exponential
  polynomials}\footnote{As opposed to ordinary polynomials
  where the exponent is only allowed to be a constant, for exponential
  polynomials the exponent can contain variables or indeed another
  exponential polynomial.}. The paper is organised as follows.

In Section~\ref{sec:highschool}, we shall show that the invertible
proof rules of
sequent calculi for intuitionistic (minimal) propositional logic
present simple isomorphisms that arise from high-school identities,
and that hence one can build a \emph{high-school} variant (HS) of a
sequent calculus, which is complete for provability, but does not need
to contain invertible proof rules. Such a calculus HS is thus a way to
obtain a proof system for intuitionistic (minimal) logic that relies
on the non-invertible rules only; this, for example, can facilitate
comparing two proofs for identity.

Furthermore, we show that one can have an intuitionistic sequent
calculus that allows an interpretation of \emph{non}-invertible rules
as \emph{in}-equalities between exponential polynomials. This is shown
on the example of Vorob'ev, Hudelmaier, and Dyckhoff's sequent
calculus G4ip~\cite{Dyckhoff1992,troelstra2000basic}, in Section~\ref{sec:inequality},
opening the possibility to use arithmetical or analytic arguments in
intuitionistic proof theory; we give an application to termination of
proof search.

In Section~\ref{sec:hierarchy}, we shall show how the analogy between
formulas and exponential polynomials can be extended to the
first-order quantifiers, obtaining a normal form for intuitionistic
first-order formulas and a novel intuitionistic ``arithmetical''
hierarchy that preserves formula isomorphism, and hence identity of
proofs. We believe that the proposed hierarchy is a technical device
that could play a rôle similar to that of the arithmetical hierarchy
for classical logic (which exists since the 1920s), as it is both
simple and semantics-preserving.

Finally, Section~\ref{sec:conclusion} summarises the results and
discusses related work.

%%% Local Variables:
%%% mode: latex
%%% TeX-master: "high-school"
%%% End:

\section{Proof Rules as Equalities}
\label{sec:highschool}
\allowdisplaybreaks

Identifying formulas of intuitionistic (minimal) propositional logic with
exponential polynomials -- by writing $F\wedge G$ as $FG$, $F\vee G$
as $F+G$, $F\to G$ as $G^F$, and $\top$ as $1$, and treating atomic formulas as variables
-- allows one to generalise the notion of \emph{validity of equations} in
the standard model of positive natural numbers by the notion of
\emph{formula isomorphism}. Namely, if we take $F\cong G$ as defined
in Section~\ref{sec:intro}, the following implication holds (see
\cite{FioreDCB2006} for a proof):
\[
F\cong G ~\Longrightarrow~ \mathbb{N}^+ \vDash F=G.
\]
That is, if $F$ and $G$ are isomorphic formulas, then the
corresponding arithmetical expressions must be equal for the variables
contained therein interpreted over the positive natural numbers.

Equation validity (and consequently formula isomorphism) poses
interesting meta-theoretic problems that go back to Tarski's
High-School Identity Problem (see
\cite{burris04,FioreDCB2006,Ilik2014}). In particular, validity (and
isomorphism) is not finitely axiomatisable: there is \emph{no} finite
set of equality axioms Ax that is sufficent to \emph{derive} every
valid equation (isomorphism), i.e. such that
$ \mathbb{N}^+ \vDash F=G ~\Longrightarrow~ \text{Ax} \vdash F=G.  $

Nevertheless, this meta-theoretic problem does not concern us in this
paper, since for the purpose of analysing intuitionistic equivalence
(the formula hierarchies presented later) and proof systems, we do
not need to have a complete characterisation of isomorphism, but
merely certain isomorphisms provable from the twelve high-school identities
(HSI axioms):
\begin{align}
  F &= F\\
  F+G &= G+F \\
  (F+G)+H &= F+(G+H)\\
  F G &= G F\\
  (F G) H &= F (G H)\\
  F(G+H) &= F G + F H \label{hsi:distr}\\
  F 1 &= F\label{hsi:multunit}\\
  F^1&= F\label{hsi:powerone}\\
  1^F&= 1\label{hsi:onepower}\\
  F^{G+H}&= F^G F^H \label{hsi:exp:1}\\
  (F G)^H&= F^H G^H \label{hsi:exp:2}\\
  (F^G)^H&= F^{G H}. \label{hsi:exp:3}
\end{align}
That is, when we close these axioms under appropriate equality and congruence
rules (see for instance \cite{Ilik2014}), we can talk about formal
derivability of an equation, $\text{HSI} \vdash F=G$, for which we
have:
\begin{equation}
  \label{eq:hsivalid}
  \tag{*}
  \text{HSI} \vdash F= G ~\Longrightarrow~ F\cong G.  
\end{equation}
Every equation derivable from the HSI axioms can thus be also seen as
establishing a strong intuitionistic equivalence.

This correspondence between formulas and exponential polynomials
suggests an investigation of the rules of intuitionistic proof systems
as rules for transforming exponential polynomials. Let us start with
the invertible\footnote{For the purpose of this paper, we adopt a
  stronger notion of invertibility than the one usual in structural
  proof theory. We not only require that the premise is provable from
  the conclusion, but moreover that the premise and the conclusion are
  isomorphic as formulas (sequents). This stronger notion of
  invertible rule is also known as an \textit{asynchronous} rule in
  the terminology of focusing sequent
  calculi~\cite{LiangMiller2007}. However, the impact of the
  terminology change is mild: the only proof rule from the paper which
  is invertible in the classic sense but not in ours is ($\to_l^P$).}
rules of the intuitionistic propositional sequent calculus, written
out in two columns, the left one giving a rule in formula notation,
while the right one gives the same rule in exponential polynomial
notation.
\begin{align}
  \tag{$\to_r$} \myinfer{\Gamma\vdash F\to G}{F,\Gamma\vdash G} &&& \myinfer{\left(G^F\right)^\Gamma}{G^{F\Gamma}}\\[1em]
  \tag{$\wedge_r$} \myinfer{\Gamma\vdash F\wedge G}{\Gamma\vdash F \quad \Gamma\vdash G} &&& \myinfer{(FG)^\Gamma}{F^\Gamma G^\Gamma}\\[1em]
  \tag{$\vee_l$} \myinfer{(F\vee G),\Gamma \vdash H}{F,\Gamma \vdash H \quad G,\Gamma\vdash H} &&& \myinfer{H^{(F+G)\Gamma}}{H^{F\Gamma} H^{G\Gamma}}\\[1em]
  \tag{$\wedge_l$} \myinfer{(F\wedge G), \Gamma\vdash H}{F,G,\Gamma\vdash H} &&& \myinfer{H^{FG\Gamma}}{H^{FG\Gamma}}
\end{align}
While we used the traditional symbols ``$\vdash$'' and comma for the
formula (sequent) notation, one should keep in mind that for us the
``$\vdash$'' is just an implication (implication being
right-associative, we could have written for instance
$\Gamma\vdash F\to G$ as $\Gamma\to F\to G$), while the comma is just
a conjunction.

We can thus see that the usual invertible rules of intuitionistic
sequent calculi correspond to polynomial simplification rules. And,
since these last ones are either instances or compositions of
high-school identities, by (\ref{eq:hsivalid}) the invertible proof
rules are also valid as formula-, that is, sequent isomorphisms. The
same is true for the additional invertible rules sometimes present,
such as the ones of the sequent calculus G4ip~\cite{Dyckhoff1992,troelstra2000basic}:
\begin{align}
  \tag{$\to_l^\wedge$}\myinfer{(G\wedge F \to H),\Gamma\vdash I}{(F\to G\to H),\Gamma\vdash I} &&& \myinfer{I^{H^{GF}\Gamma}}{I^{(H^G)^F\Gamma}}\\[1em]
  \tag{$\to_l^\vee$}\myinfer{(F\vee G\to H),\Gamma\vdash I}{(F\to H),(G\to H),\Gamma\vdash I} &&& \myinfer{I^{H^{F+G}\Gamma}}{I^{H^F H^G \Gamma}}.
\end{align}

In order to express a complete version of the intuitionistic sequent
calculus in terms of exponential polynomials, we need to consider the
non-invertible proof rules as well. In particular, it suffices to
consider the remaining rules of G4ip:
\begin{align}
  \tag{axiom} \myinfer{P,\Gamma\vdash P}{} &&& \myinfer{P^{P\Gamma}}{} \\[1em]
  \tag{$\vee^1_r$} \myinfer{\Gamma\vdash F\vee G}{\Gamma\vdash F} &&& \myinfer{(F+G)^\Gamma}{F^\Gamma} \\[1em]
  \tag{$\vee^2_r$} \myinfer{\Gamma\vdash F\vee G}{\Gamma\vdash G} &&& \myinfer{(F+G)^\Gamma}{G^\Gamma} \\[1em]
  \tag{$\to_l^P$} \myinfer{(P\to F),P,\Gamma\vdash G}{F,P,\Gamma\vdash G} &&& \myinfer{G^{F^PP\Gamma}}{G^{F P \Gamma}} \\[1em]
  \tag{$\to_l^\to$} \myinfer{((F\to G)\to H),\Gamma\vdash I}{(G\to H),\Gamma\vdash F\to G \quad H,\Gamma\vdash I} &&& \myinfer{I^{H^{G^F}\Gamma}}{(G^F)^{H^G \Gamma}I^{H\Gamma}},
\end{align}
where $P$ denotes a prime (i.e. atomic) formula.\footnote{We do not
  give a special treatment for intuitionistic absurdity, or negation;
  $\bot$ would be treated as an atomic proposition as any other, and
  $\neg$ would be replaced by implication into $\bot$. Note that the
  identification of $\bot$ with the polynomial $0$ would be
  problematic, since in the presence of $0$ the simple correspondence
  between isomorphism and equation validity that we use breaks
  (see~\cite{FioreDCB2006}). Note also that even in such a setting
  known as minimal logic~\cite{SchwichtenbergW2011}, one can recover
  the proof rule \textit{ex falso quodlibet} as soon as some form of
  induction axiom is present (see, for instance, Section~7.1.3
  of~\cite{SchwichtenbergW2011}).}

Due to the absence of contraction in G4ip, all of the non-invertible
rules $\myinfer{G}{F}$ satisfy the arithmetic inequality $F \le G$
when variables are interpreted in $\{n\in\mathbb{N}\mid n\ge 2\}$. We
shall prove this in Section~\ref{sec:inequality}, since the case of
$(\to_l^\to)$ is not obvious.

The goal for the present section is to derive from G4ip a proof
system that does not contain any of the (bureaucratic and
non-informative, from the identity of proofs perspective) invertible
rules. This system, written with the help of exponential polynomial
notation is called the \emph{high-school} variant of G4ip
(HS). Proofs in HS only consist of the translations of the
informative rules of G4ip. However, that the procedure for
deriving HS is generic, and could be performed on another version of
the intuitionistic sequent
calculus.

The starting idea is to use the analytic transformation,
\[
G^F = \numbere^{F\log G} = \exp(F\log(G)),
\]
in order to decompose binary exponentiation (i.e.\ implication) in
terms of unary exponentiation and the logarithmic function, just as
the approach to normal forms in exponential fields
\cite{hardy}.\footnote{This hints at interpreting implication
  ``classically'', that is, in terms of two distinct ``negation''
  symbols, $\neg_{\exp}$ and $\neg_{\log}$, such that
  \[
    F\to G := \neg_{\exp}(F \wedge \neg_{\log}G),
  \]
  but we do not pursue this superficial analogy further in this paper.}

As already analysed in a previous study of the
$\beta\eta$-equations for terms of normalised type \cite{explog}, the
exp-log decomposition of implication leads to a normal form of
propositional formulas that is obtained by left-to-right rewriting
using the high-school identities (isomorphisms) (\ref{hsi:exp:1}),
(\ref{hsi:exp:2}), (\ref{hsi:exp:3}), and (\ref{hsi:distr}). 

There is some liberty in determining the order in which to apply the
equations. One precise and structurally recursive procedure for
computing the normal form of a formula (i.e.\ sequent), $\enf{-}$,
suitable for establishing Theorem~\ref{thm:embed}, is given in
Figure~\ref{fig:norm}\footnote{This is a mathematical notation for the
  procedures formalised in \cite{formalisation} and also used in
  \cite{explog}.}. It maps any formula to an isomorphic formula from
the class of \emph{exp-log normal forms} ($\mathcal{E}$), defined by
the mutually inductively defined classes of base formulas
($\Base$), conjunctions ($\CNF$), and disjunctions
($\DNF$):
  \begin{align*}
    \Base\ni b &::= p ~|~ d & \\
    \CNF\ni c &::= (c_1\to b_1) \wedge \cdots \wedge (c_n\to b_n) & (n\ge 0)\ \\
    \DNF\ni d &::= c_1 \vee \cdots \vee c_n & (n\ge 2)\ \\
    \mathcal{E}\ni e &::= c ~|~ d, &
  \end{align*}
  that is,
  \begin{align*}
    \Base\ni b &::= p ~|~ d & \mathcal{E}\ni e &::= c ~|~ d\\
    \CNF\ni c &::= \prod_{i=1}^{n\ge 0} b_i^{c_i} & \DNF\ni d &::= \sum_{i=1}^{n\ge 2} c_i,
  \end{align*}
  where $p$ denotes a prime formula.  The variables $p, b, c, d, e$,
  possibly with indexing subscripts, will always be used to stand for
  members of the corresponding class.  The unit $1$ (i.e. the formula
  $\top$) is not a prime formula, but rather denotes the nullary
  product $\prod_{i=1}^0 b_i^{c_i}$.

  \begin{figure}
    \centering
    \begin{align*}
      \Base \ni b &::= p ~|~ d &
                                 \CNF \ni c &::= 1 ~|~ b^{c_1} c_2\\
      \DNF \ni d &::= c_1 \plus c_2 ~|~ c \plus d & 
                                                    \ENF \ni e &::= c ~|~ d
    \end{align*}
    
    \begin{align*}
      \nplus{-}{-} &: \ENF \to \ENF \to \DNF\\
      \nplus{c_1}{e_2} &:= c_1 \plus e_2\\
      \nplus{(c_{11} \plus c_{12})}{e_2} &:= c_{11}  \plus  (c_{12}  \plus  e_{2})\\
      \nplus{(c_{11} \plus d_{12})}{e_2} &:= c_{11} \plus (\nplus{d_{12}}{e_2})\\[1em]
      \ntimes{-}{-} &: \CNF \to \CNF \to \CNF\\
      \ntimes{1}{c_2} &:= c_2\\
      \ntimes{b^{c_{11}}c_{12}}{c_2} &:= b^{c_{11}} (\ntimes{c_{12}}{c_2})\\[1em]
      \distribone{-}{-} &: \CNF \to \ENF \to \ENF\\
      \distribone{c_1}{c_2} &:= \ntimes{c_1}{c_2}\\
      \distribone{c_1}{(c_{21} \plus c_{22})} &:= (\ntimes{c_1}{c_{21}})  \plus  (\ntimes{c_1}{c_{22}})\\
      \distribone{c_1}{(c_{21} \plus d_{22})} &:= (\ntimes{c_1}{c_{21}})  \plus  (\distribone{c_1}{d_{22}})\\[1em]
      \explogone{-}{-} &: \Base \to \ENF \to \CNF\\
      \explogone{b}{c} &:= b^c 1\\
      \explogone{b}{(c_1  \plus  c_2)} &:= \ntimes{(b^{c_1} 1)}{(b^{c_2} 1)}\\
      \explogone{b}{(c_1  \plus  d_2)} &:= \ntimes{(b^{c_1} 1)}{(\explogone{b}{d_2})}\\[1em]
      \explog{-}{-} &: \CNF \to \ENF \to \CNF\\
      \explog{1}{e_2} &:= 1\\
      \explog{b^{c_{11}} c_{12}}{e_2} &:= \ntimes{(\explogone{b}{(\distribone{c_{11}}{e_2})})}{(\explog{c_{12}}{e_2})}\\[1em]
      \distrib{-}{-} &: \ENF \to \ENF \to \ENF\\
      \distrib{c_1}{e_2} &:= \distribone{c_1}{e_2}\\
      \distrib{(c_{11} \plus c_{12})}{e_2} &:= \nplus{(\distribone{c_{11}}{e_2})}{(\distribone{c_{12}}{e_2})}\\
      \distrib{(c_{11} \plus d_{12})}{e_2} &:= \nplus{(\distribone{c_{11}}{e_2})}{(\distrib{d_{12}}{e_2})}
    \end{align*}
    
    \begin{align*}
      \enf{-} &: \Formula \to \ENF & \enfpos{-} &: \Formula \to \CNF\\
      \enf{p} &:= p^1 1 & \enfpos{p} &:= p^1 1\\
      \enf{F \vee G} &:= \nplus{\enf{F}}{\enf{G}} & \enfpos{F \vee G} &:= (\nplus{\enfpos{F}}{\enfpos{G}})^1 1\\
      \enf{F \wedge G} &:= \distrib{\enf{F}}{\enf{G}} & \enfpos{F \wedge G} &:= \ntimes{\enfpos{F}}{\enfpos{G}}\\
      \enf{F \to G} &:= \explog{\enfpos{G}}{\enf{F}} & \enfpos{F \to G} &:= \explog{\enfpos{G}}{\enf{F}}
    \end{align*}
    
    \caption{Formula normalization functions}
    \label{fig:norm}
  \end{figure}
  
  The formal definitions from Figure~\ref{fig:norm} are a mathematical
  notation for definitions implemented~\cite{formalisation} using the
  Coq proof assistant.  The function $\enf{-}$ is defined
  simultaneously with the $\enfpos{-}$-function, whose purpose is to
  guarantee the desired ordering of HSI's, i.e. that equation
  (\ref{hsi:distr}) is not applied at the base of exponentiation
  before equation (\ref{hsi:exp:2}). It uses operations
  $\nplus{}{}, \ntimes{}{}, \distrib{}{}, \distribone{}{},
  \explog{}{}$, and $\explogone{}{}$. The intuition behind these
  operations is as follows.

  The function $\nplus{}{}$ turns a binary plus (disjunction) into an
  $n$-ary one, more precisely, it flattens a tree of binary
  $\plus$-constructors % (notice the boldface)
  into a tail-inductive list, i.e. a list with all the $\plus$-s
  associated to the right,
  $c_1\plus(c_2\plus(c_3\plus(\cdots+c_n)))$. The function
  $\ntimes{}{}$ does the analogous thing for multiplication
  (conjunction).

  The functions $\distrib{}{}$ and $\distribone{}{}$ apply the left,
  correspondingly right, distributivity law. They are meant to
  implement the following informal equations:
  \begin{align*}
    \distrib{p}{d} &= \distribone{p^11}{d} & \distribone{c}{p} &= c p^11\\
    \distrib{\left(\sum_{i=1}^n c_i\right)}{d} &= \sum_{i=1}^n \left(\distribone{c_i}{d}\right) & \distribone{c}{\sum_{i=1}^{n} c_i} &= \sum_{i=1}^{n} c c_i.
  \end{align*}
  Finally, the functions $\explog{}{}$ and $\explogone{}{}$ normalise
  exponentiations following (\ref{hsi:exp:1}), (\ref{hsi:exp:2}) and
  (\ref{hsi:exp:3}), in order for the following informal equations to
  hold:
  \begin{align*}
    \explogone{b}{p} &= b^{p^1 1}1 & \explog{b^1 1}{e} &= \explogone{b}{e}\\
    \explogone{b}{\sum_{i=1}^{n} c_i} &= \prod_{i=1}^{n} {b}^{c_i} & \explog{\left(\prod_{i=1}^{n\ge 0} {b_i}^{c_i}\right)}{e} &= \prod_{i=1}^{n\ge 0} \left(\explogone{{b_i}}{\left(\distribone{c_i}{e}\right)}\right).
  \end{align*}

  In order to prove the main theorem of this section,
  Theorem~\ref{thm:embed}, we shall need to establish
  Lemma~\ref{lem:equations} on \emph{formal} equalities, where ``$=$''
  denotes multiset equality of terms, or, more precisely, we will
  have the formal equality
  $b_1^{c_1}\cdots b_i^{c_i}\cdots b_j^{c_j}\cdots b_n^{c_n} =
  b_1^{c_1}\cdots b_j^{c_j}\cdots b_i^{c_i}\cdots b_n^{c_n}$, for any
  $1\leq i,j\leq n$. We use multiset equality because it allows us to
  work modulo commutativity and associativity of multiplication
  (conjunction), and as it is in common usage when presenting sequent calculi.

  But, we first point out that the normalization function $\enf{-}$
  provides a way to classify propositional formulas while preserving
  isomorphism.
  \begin{theorem}\label{thm:propositional:hierarchy}
    For every propositional formula $F$, there is an isomorphic
    formula $e\in \DNF\cup\CNF$, where the classes
    $\DNF$, $\CNF$, and $\Base$ are defined
    simultaneously as follows,
    \begin{align*}
      \DNF\ni d &::= c_1 \vee \cdots \vee c_n & (n\ge 2)\ \\
      \CNF\ni c &::= (c_1\to b_1) \wedge \cdots \wedge (c_n\to b_n) & (n\ge 0)\\
      \Base\ni b &::= p ~|~ d,
    \end{align*}
    or, in exponential polynomial notation:
    \begin{align*}
      \DNF\ni d &::= \sum_{i=1}^{n\ge 2} c_i & 
      \CNF\ni c &::= \prod_{i=1}^{n\ge 0} b_i^{c_i} &
      \Base\ni b &::= p ~|~ d,
    \end{align*}
    where $p$ denotes a prime formula.
  \end{theorem}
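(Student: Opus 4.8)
The plan is to take $e := \enf{F}$, the exp-log normal form computed by the function of Figure~\ref{fig:norm}, and to verify the two requirements of the statement separately: that $\enf{F}$ really is a member of $\mathcal{C}\cup\mathcal{D}$, and that $F\cong\enf{F}$. The first requirement is essentially a matter of reconciling two presentations of the same class, while the second is where the high-school identities do the work. In fact, once isomorphism-preservation of $\enf{-}$ is in hand, the theorem is just a record of the codomain of that function.

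For the typing, I would first observe that the grammar for $\ENF=\CNF\cup\DNF$ used in Figure~\ref{fig:norm} and the grammar for $\mathcal{C}\cup\mathcal{D}$ in the statement describe the same set of formulas: the right-associated binary products $\CNF\ni c::=1\mid b^{c_1}c_2$ are exactly the $n$-ary conjunctions $\prod_{i=1}^{n\ge 0}b_i^{c_i}$, and the right-associated binary sums $\DNF\ni d::=c_1\plus c_2\mid c\plus d$ are exactly the $n$-ary disjunctions $\sum_{i=1}^{n\ge 2}c_i$, the base clause $c_1\plus c_2$ matching the side condition $n\ge 2$. A trivial induction on the binary structure recovers the $n$-ary reading in both cases. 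Given this identification, it remains to check that $\enf{-}$ is total and lands in $\ENF$, which follows by structural induction on $F$ directly from the declared signatures in Figure~\ref{fig:norm}: each auxiliary operation $\nplus{}{},\ntimes{}{},\distrib{}{},\distribone{}{},\explog{}{},\explogone{}{}$ respects its stated domain and codomain, so the clauses defining $\enf{-}$ (and the simultaneously defined $\enfpos{-}:\Formula\to\CNF$) compose to a function into $\ENF$.

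For the isomorphism, the key point is that every clause in Figure~\ref{fig:norm} rewrites its input using only the high-school identities (\ref{hsi:distr}), (\ref{hsi:exp:1}), (\ref{hsi:exp:2}), (\ref{hsi:exp:3}), together with the unit laws (\ref{hsi:multunit}), (\ref{hsi:powerone}), (\ref{hsi:onepower}) and commutativity and associativity of $+$ and $\times$. I would prove, by a mutual structural induction following the recursion in Figure~\ref{fig:norm}, that each auxiliary operation is HSI-sound in the sense of the informal equations listed after the figure; for instance that $\text{HSI}\vdash \nplus{e_1}{e_2}=e_1+e_2$, that $\text{HSI}\vdash \distrib{e_1}{e_2}=e_1 e_2$, and that $\text{HSI}\vdash\explog{c}{e}=c^{e}$. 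From these one concludes $\text{HSI}\vdash \enf{F}=F$ and $\text{HSI}\vdash\enfpos{F}=F$. These two statements must be carried together in the induction, because $\enfpos{-}$ is used precisely at the base of an exponentiation, in the clause $\enf{F\to G}:=\explog{\enfpos{G}}{\enf{F}}$, to keep the result inside $\CNF$; the wrapping $(\cdots)^1 1$ appearing in $\enfpos{F\vee G}$ is then justified by $F^1=F$ from (\ref{hsi:powerone}) and $F 1=F$ from (\ref{hsi:multunit}). Once $\text{HSI}\vdash\enf{F}=F$ is established, the implication (\ref{eq:hsivalid}) immediately yields $F\cong\enf{F}$, and setting $e:=\enf{F}$ completes the proof.

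The main obstacle is the bookkeeping of this mutual induction: one must check clause by clause that the polynomial operations compose the identities in the correct order, and in particular that distributivity (\ref{hsi:distr}) is never applied at the base of an exponentiation before (\ref{hsi:exp:2}) has exposed a product there, which is exactly why the $\enfpos{-}$ variant exists and why its coherence with $\enf{-}$ must be tracked throughout. This is precisely the content that the forthcoming Lemma~\ref{lem:equations} isolates, so in the final write-up this theorem should be presented as a direct consequence of that lemma together with (\ref{eq:hsivalid}): the lemma supplies the equalities making the normalization isomorphism-preserving, and the present statement merely reads off where $\enf{-}$ takes its values.
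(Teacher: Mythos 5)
Your proposal is correct and takes essentially the same route as the paper: set $e:=\enf{F}$, observe that the structurally recursive normalization of Figure~\ref{fig:norm} is total with range $\mathcal{D}\cup\mathcal{C}$, and obtain $F\cong\enf{F}$ from~(\ref{eq:hsivalid}) because every defining clause is an instance or simple consequence of the HSI axioms under the reading that replaces $\times$, $\ltimes$, $\rtimes$ by multiplication, $\oplus$ by addition, $\explogone{b}{e}$ by $b^e$, $\explog{c}{e}$ by $c^e$, and $\enf{F}$, $\enfpos{F}$ by $F$. One correction to your closing remark, though: this theorem is \emph{not} a consequence of Lemma~\ref{lem:equations} --- in the paper it is stated and proved \emph{before} that lemma, and the lemma's equations (e.g.\ $\ntimes{c}{1}=c$, associativity of $\distrib{}{}$) are coherence relations \emph{among the normalization operations themselves}, needed later for Theorem~\ref{thm:embed}; they are not the HSI-soundness statements $\text{HSI}\vdash\enf{F}=F$ that your mutual induction establishes, so your proof stands on its own and should not be rewritten as a corollary of that lemma.
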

  \begin{proof} Given $F$, the required $e$ is $\enf{F}$.  The
    normalization procedure defined in Figure~\ref{fig:norm} is
    structurally recursive, hence terminating, and with range
    $\DNF\cup\CNF$.

    Isomorphism between $F$ and $\enf{F}$ holds as a consequence
    of~(\ref{eq:hsivalid}), because the equations defining the
    normalization procedure in Figure~\ref{fig:norm} are just
    instances or simple consequences of the HSI: to see this, it
    suffices to replace the symbols $\times$, $\ltimes$ and $\rtimes$
    by multiplication, $\oplus$ by addition, $\explogone{b}{e}$ by
    $b^e$, $\explog{c}{e}$ by $c^e$, and $\enf{F}$ and $\enfpos{F}$ by
    $F$.
  \end{proof}
  
  \begin{lemma}\label{lem:equations} The following equations hold for
    the normalization functions defined in Figure~\ref{fig:norm}:
    \begin{align}
      \ntimes{c}{1} &= c \label{ntimes_top}\\
      \ntimes{c_1}{(\ntimes{c_2}{c_3})} &= \ntimes{(\ntimes{c_1}{c_2})}{c_3} \label{ntimes_assoc}\\
      \nplusone{d}{(\nplus{e_2}{e_3})} &= \nplusone{(\nplusone{d}{e_2})}{e_3} \label{nplus1_assoc}\\
      \nplus{e_1}{(\nplus{e_2}{e_3})} &= \nplus{(\nplus{e_1}{e_2})}{e_3} \label{nplus_assoc}\\
      \distribzero{c}{(\nplusone{d}{e})} &= \nplus{(\distribzero{c}{d})}{(\distribone{c}{e})} \label{distrib0_nplus1}\\
      \distribzero{c}{(\nplus{e_1}{e_2})} &= \nplus{(\distribone{c}{e_1})}{(\distribone{c}{e_2})} \label{distrib0_nplus}\\
      \distribn{(\nplusone{d}{e_1})}{e_2} &= \nplus{(\distribn{d}{e_2})}{(\distrib{e_1}{e_2})} \label{distribn_nplus1}\\
      \distribn{(\nplus{e_0}{e_1})}{e_2} &= \nplus{(\distrib{e_0}{e_2})}{(\distrib{e_1}{e_2})} \label{distribn_nplus}\\
      \distribone{1}{e} &= e \label{distrib1_top}\\
      \distribone{c_1}{(\distribzero{c_2}{d})} &= \distribzero{(\ntimes{c_1}{c_2})}{d} \label{distrib1_distrib0}\\
      \distribone{c_1}{(\distribone{c_2}{e})} &= \distribone{(\ntimes{c_1}{c_2})}{e} \label{distrib1_distrib1}\\
      \distribone{c}{(\distribn{d}{e})} &= \distrib{(\distribzero{c}{d})}{e} \label{distrib1_distribn}\\
      \distribone{c}{(\distrib{e_1}{e_2})} &= \distrib{(\distribone{c}{e_1})}{e_2} \label{distrib1_distrib}\\
      \distribn{d}{(\distrib{e_1}{e_2})} &= \distrib{(\distribn{d}{e_1})}{e_2} \label{distribn_distrib}\\
      \distrib{e_1}{(\distrib{e_2}{e_3})} &= \distrib{(\distrib{e_1}{e_2})}{e_3} \label{distrib_assoc}\\
      \explog{c}{1} &= c \label{explogn_top}\\
      \explog{(\ntimes{c_1}{c_2})}{e} &= \ntimes{(\explog{c_1}{e})}{(\explog{c_2}{e})}\label{explogn_ntimes}\\
      \explogzero{b}{(\nplus{d}{e})} &= \ntimes{(\explogzero{b}{d})}{(\explogone{b}{e})} \label{explog0_nplus1}\\
      \explogzero{b}{(\nplus{e_1}{e_2})} &= \ntimes{(\explogone{b}{e_1})}{(\explogone{b}{e_2})} \label{explog0_nplus}\\
      % \explogone{b}{(\distribzero{c}{d})} &= \explog{(\explogone{b}{c})}{d}\\
      % \explogone{b}{(\distribone{c}{e})} &= \explog{(\explogone{b}{c})}{e}\\
      \explogone{b}{(\distrib{e_1}{e_2})} &= \explog{(\explogone{b}{e_1})}{e_2} \label{explogn_explog1}\\
      \explog{c}{(\distrib{e_1}{e_2})} &= \explog{(\explog{c}{e_1})}{e_2} \label{explogn_distrib}\\
      \explog{c}{(\nplus{e_1}{e_2})} &= \ntimes{(\explog{c}{e_1})}{(\explog{c}{e_2})} \label{explogn_nplus}\\
      \explog{c}{(\distribn{(\nplus{e_1}{e_2})}{e_3})} &= \ntimes{(\explog{c}{(\distrib{e_1}{e_3})})}{(\explog{c}{(\distrib{e_2}{e_3})})} \label{explogn_distribn_nplus}
    \end{align}
  \end{lemma}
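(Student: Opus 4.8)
The plan is to prove each of the twenty-three equations by structural induction, with the induction variable in each case dictated by which argument the relevant function recurses on. Reading off Figure~\ref{fig:norm}: $\ntimes{}{}$ and $\explog{}{}$ recurse on their first ($\CNF$) argument; $\nplus{}{}$ and $\distrib{}{}$ recurse on their first ($\ENF$) argument, descending into its $\DNF$ structure; whereas $\distribone{}{}$ and $\explogone{}{}$ recurse on their second ($\ENF$) argument. Consequently every equation reduces to a finite case split whose recursive cases close under the induction hypothesis together with equations proven earlier in the list. Since all functions are structurally recursive, termination of the inductions is automatic.

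The equations must be discharged in a dependency-respecting order, and the listing itself is essentially such an order. I would begin with the multiplicative laws (\ref{ntimes_top}) (right unit) and (\ref{ntimes_assoc}) (associativity), both by induction on the leading $\CNF$. Next come the additive associativity laws (\ref{nplus1_assoc}) and (\ref{nplus_assoc}); here (\ref{nplus_assoc}) follows from (\ref{nplus1_assoc}) by splitting $e_1$ into its $\CNF$- and $\DNF$-cases. The distributivity block (\ref{distrib0_nplus1})--(\ref{distrib_assoc}) is then routine bookkeeping: the laws (\ref{distrib0_nplus1})--(\ref{distrib1_top}) unfold the two distribution operators against $\nplus{}{}$ and the unit, feeding the absorption laws (\ref{distrib1_distrib0})--(\ref{distrib1_distrib}) that relate $\distribone{}{}$ and $\distrib{}{}$ to $\ntimes{}{}$, which in turn yield associativity of $\distrib{}{}$ in (\ref{distrib_assoc}) via (\ref{distribn_distrib}).

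The genuinely delicate part is the exp-log block (\ref{explogn_top})--(\ref{explogn_distribn_nplus}), because the recursive clause for $\explog{b^{c_{11}}c_{12}}{e_2}$ makes $\explog{}{}$ call $\explogone{}{}$, $\distribone{}{}$, and $\ntimes{}{}$ simultaneously. After the easy laws (\ref{explogn_top}), (\ref{explogn_ntimes}), (\ref{explog0_nplus1}), and (\ref{explog0_nplus}), the linchpin is (\ref{explogn_explog1}) --- the formal incarnation of the high-school identity $(F^G)^H = F^{GH}$, i.e. equation (\ref{hsi:exp:3}) --- which I would prove by induction on the second argument $e_1$ of $\explogone{}{}$, the base case collapsing through $\explog{1}{e} = 1$ and (\ref{ntimes_top}), and the sum cases using (\ref{explog0_nplus}), (\ref{explogn_ntimes}) together with the distribution-absorption laws (e.g. (\ref{distrib1_distrib1})). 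Equation (\ref{explogn_distrib}) is then obtained by induction on the $\CNF$ $c$, promoting (\ref{explogn_explog1}) through (\ref{explogn_ntimes}); equation (\ref{explogn_nplus}) formalizes $F^{G+H} = F^G F^H$ (equation (\ref{hsi:exp:1})); and the final equation (\ref{explogn_distribn_nplus}) splits a distributed product under exponentiation by combining (\ref{distribn_nplus}) with (\ref{explogn_nplus}).

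I expect the main obstacle to lie exactly in the mutual recursion between $\explog{}{}$ and $\distrib{}{}$: inducting on the first argument of $\explog{}{}$ forces one to reason about $\distribone{c_{11}}{e}$ occurring inside $\explogone{}{}$, so the induction hypotheses must be stated uniformly in the second argument, and the relevant distribution-absorption equations must already be in hand to move the distributions through the exponentials. Because every function is structurally recursive and the dependency graph on the equations is acyclic, no individual case is deep; the real labor is the purely combinatorial bookkeeping of which earlier equation to invoke in each recursive step, which is precisely why the authors delegate the verification to Coq.
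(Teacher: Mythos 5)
Your proposal matches the paper's proof essentially step for step: the same induction variables (determined by which argument each function recurses on), the same dependency-respecting ordering of the equations, and the same key steps --- notably proving (\ref{explogn_explog1}) by induction on $e_1$ via (\ref{ntimes_top}) and (\ref{explog0_nplus}), then deriving (\ref{explogn_distrib}) by induction on $c$, and obtaining (\ref{explogn_distribn_nplus}) from (\ref{distribn_nplus}) and (\ref{explogn_nplus}). The only omissions are cosmetic: the paper also notes that (\ref{nplus_assoc}) needs case analysis on $e_2,e_3$ (not just $e_1$) to unfold the definitions, and it remarks that (\ref{explogn_nplus}) is the unique equation requiring commutativity of multiplication rather than holding definitionally.
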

  \begin{proof} The proofs proceed as follows: (\ref{ntimes_top}) by
    induction on $c$; (\ref{ntimes_assoc}) by induction on $c_1$;
    (\ref{nplus1_assoc}) by induction on $d$; (\ref{nplus_assoc}) by
    case analysis on $e_1, e_2, e_3$ and using (\ref{nplus1_assoc});
    (\ref{distrib0_nplus1}) by induction on $d$;
    (\ref{distrib0_nplus}) by case analysis on $e_1, e_2$ and using
    (\ref{distrib0_nplus1}); (\ref{distribn_nplus1}) by induction on
    $d$ and using (\ref{nplus_assoc}); (\ref{distribn_nplus}) by case
    analysis on $e_1, e_2$ and using (\ref{distribn_nplus1});
    (\ref{distrib1_top}) by induction on $e$;
    (\ref{distrib1_distrib0}) by induction on $d$ and using
    (\ref{nplus_assoc}); (\ref{distrib1_distrib1}) by case analysis on
    $e$ and using (\ref{distrib1_distrib0}) and (\ref{nplus_assoc});
    (\ref{distrib1_distribn}) by induction on $d$ and using
    (\ref{distrib0_nplus}) and (\ref{distrib1_distrib1});
    (\ref{distrib1_distrib}) by case analysis on $e_1$ and using
    (\ref{distrib1_distribn}) and (\ref{distrib1_distrib1});
    (\ref{distribn_distrib}) by induction on $d$ and using
    (\ref{distribn_nplus}) and (\ref{distrib1_distrib});
    (\ref{distrib_assoc}) by case analysis on $e_1$ and using
    (\ref{distrib1_distrib}) and (\ref{distribn_distrib});
    (\ref{explogn_top}) by induction on $c$ and using
    (\ref{ntimes_top}); (\ref{explogn_ntimes}) by induction on $c_1$
    and using (\ref{ntimes_assoc}); (\ref{explog0_nplus1}) by
    induction on $d$; (\ref{explog0_nplus}) by induction on $e_1$ and
    using (\ref{explog0_nplus1}); (\ref{explogn_explog1}) by induction
    on $e_1$ and using (\ref{ntimes_top}) and (\ref{explog0_nplus});
    (\ref{explogn_distrib}) by induction on $c$ and using
    (\ref{explogn_ntimes}), (\ref{explogn_explog1}), and
    (\ref{distrib1_distrib}); (\ref{explogn_nplus}) by induction on
    $c$ and using (\ref{distrib0_nplus}) and (\ref{explog0_nplus});
    (\ref{explogn_distribn_nplus}) by using (\ref{distribn_nplus}) and
    (\ref{explogn_nplus}).

    It may be interesting to notice that in fact (\ref{explogn_nplus})
    and (\ref{explogn_distribn_nplus}) are the only cases where the
    multiset nature of the equality is needed -- it is needed because
    it implies the associativity and commutativity of
    ``$\times$''. For all of the other cases, it suffices to have the
    definitional (intensional) equality on ordered sequences (lists),
    which is the equality that we worked with in the Coq
    formalisation. The remark is interesting for us, because it
    pinpoints which sequent calculi proof rules exactly need to work
    with multisets: as can be seen from the proof of
    Theorem~\ref{thm:embed}, these rules are ($\vee_l$) and
    ($\to_l^{\vee}$).
  \end{proof}

  \begin{figure}
    \centering
    \begin{gather}
      \tag{axiom} \myinfer{\explogone{p}{(\distribone{p}{e}})}{} \\[1em]
      \tag{$\vee^1_r$} \myinfer{\explogone{(c_1 \plus c_2)}{e}}{\explog{c_1}{e}} \\[1em]
      \tag{$\vee^2_r$} \myinfer{\explogone{(c_1 \plus c_2)}{e}}{\explog{c_2}{e}} \\[1em]
      \tag{$\to_l^{P}$} \myinfer{\explog{c}{(\distribone{(\explog{\enfpos{F}}{p})}{(\distribone{p}{e})})}}{\explog{c}{(\distrib{\enf{F}}{(\distribone{p}{e})})}} \\[1em]
      \tag{$\to_l^{\to}$} \myinfer{\explog{c}{(\distribone{(\explog{\enfpos{H}}{(\explog{\enfpos{G}}{e_1})})}{e_2})}}{\ntimes{(\explog{(\explog{\enfpos{G}}{e_1})}{(\distribone{(\explog{\enfpos{H}}{\enf{G}})}{e_2})})}{(\explog{c}{(\distrib{\enf{H}}{e_2})})}}
    \end{gather}
    \caption{Proof rules of the High-school sequent calculus (HS) for G4ip}
    \label{fig:hs}
  \end{figure}

  Armed with a precise and terminating transformation of formulas, we
  can now state the HS variant of G4ip in Figure~\ref{fig:hs}. For
  prime formulas $p$, we make a harmless abuse of notation by writing
  just $p$ instead of $p^11$ when an argument of type $\CNF$ is
  expected by an operation.

  Notice that our calculus consists of non-invertible rules only,
  tagged with the tag of the G4ip rule they correspond to (to be shown
  in Theorem~\ref{thm:embed} below). The rules ($\to_l^{P}$) and
  ($\to_l^{\to}$) mention usual formulas $F,G,H$. This is done on
  purpose, so that the correspondence to G4ip rules is as tight as
  possible. If one wants to mention only formulas from the normalised
  classes, one can consider the following reformulations of the rules,
  \begin{gather}
    \tag{$\to_l^{P}$'} \myinfer{\explog{c}{(\distribone{(\explog{c_0}{p})}{(\distribone{p}{e})})}}{\explog{c}{(\distrib{\partial c_0}{(\distribone{p}{e})})}} \\[1em]
    \tag{$\to_l^{\to}$'} \myinfer{\explog{c}{(\distribone{(\explog{c_1}{(\explog{c_2}{e_1})})}{e_2})}}{\ntimes{(\explog{(\explog{c_2}{e_1})}{(\distribone{(\explog{c_1}{\partial c_2})}{e_2})})}{(\explog{c}{(\distrib{\partial c_1}{e_2})})}},
  \end{gather}
  where $\partial$ denotes the map $\enfpos{F}\mapsto\enf{F}$ that
  distributes the product over the sums of the form
  $(c_1+\cdots+c_n)^1$ in $\enfpos{F}$; here, the exponent $1$ is used
  to suspend normalization, that is, permit the isomorphism
  (\ref{hsi:exp:2}) to be applied before (\ref{hsi:distr}) at the base
  of exponentiation.

  Note also, that one can make the normalization functions disappear
  from any \emph{concrete} proof in HS notation, so one could adopt
  the view that the functions are there merely for a compact
  presentation of the rules. For instance, considering the case of the
  $(\to^\to_l)$'-rule from HS, where $c:=p^11$, $c_1:=p^11$,
  $c_2:=q^11$, $e_1:=r^11$, $e_2:=s^11$, and $p,q,r,s$ are prime
  formulas, we retrieve just the corresponding G4ip rule in polynomial
  notation,
  \begin{gather*}
    \myinfer{p^{p^{q^{r^1 1}1} s^1 1}1}{\left(q^{r^1 p^{q^1 1} s^1 1}1\right)\times\left(p^{p^1 s^1 1} 1\right)},
  \end{gather*}
  or the more readable one, by the harmless abuse of notation for
  prime formulas mentioned earlier ($c:=p$, $c_1:=p$, $c_2:=q$,
  $e_1:=r$, $e_2:=s$) and by omitting the trailing $1$:
  \begin{gather*}
    \myinfer{p^{p^{q^{r}}s}}{q^{r p^{q} s}p^{p s}}.
  \end{gather*}
  For a concrete example of the rules involving disjunction, consider
  the case where $c_1:=p$, $c_2:=q$, $e:=r + s$, and the concrete
  instance of the ($\vee^1_r$) rule:
  \begin{gather*}
    \myinfer{\left(p+q\right)^{r}\left(p+q\right)^{s}}{p^{r} p^{s}}.
  \end{gather*}

  We now show in which sense HS is a version of G4ip. This will
  also imply that HS is a proof system complete for intuitionistic
  provability.

  \begin{theorem}\label{thm:embed} 
    Every derivation of $F$ in G4ip can be transformed to a derivation of
    $\enf{F}$ in HS.
  \end{theorem}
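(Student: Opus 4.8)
The plan is to proceed by induction on the structure of the given G4ip derivation of the sequent corresponding to $F$, translating it rule by rule and reading each intermediate sequent through the normalization map $\enf{-}$. The governing principle is that the invertible rules of G4ip should \emph{disappear} under normalization, while each non-invertible rule is matched by the HS rule carrying the same tag. To make this precise, the first step is a preliminary computation carried out entirely with the formal equalities of Lemma~\ref{lem:equations}: for every invertible rule of G4ip I compute the normal form of the conclusion sequent in terms of the normal forms of its premises, and for every non-invertible rule I check that the normalized conclusion and premises coincide with the conclusion and premises of the like-named HS rule.

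For the single-premise invertible rules $(\to_r)$, $(\wedge_l)$, $(\to_l^\wedge)$, $(\to_l^\vee)$ I expect to obtain $\enf{\text{conclusion}} = \enf{\text{premise}}$ on the nose, since these rules are instances of the high-school identities (\ref{hsi:exp:3}), (\ref{hsi:exp:1}) and of the identification of the context comma with multiplication; the matching equalities of normal forms are exactly (\ref{explogn_distrib}), (\ref{explog0_nplus})--(\ref{explogn_nplus}) together with (\ref{ntimes_top}) and (\ref{ntimes_assoc}). In each such case the induction hypothesis already furnishes the required HS derivation, with nothing further to do. For the two branching invertible rules $(\wedge_r)$ and $(\vee_l)$ I instead expect $\enf{\text{conclusion}} = \ntimes{\enf{\text{premise}_1}}{\enf{\text{premise}_2}}$, the key steps being (\ref{explogn_ntimes}) for $(\wedge_r)$ and (\ref{distribn_nplus}) followed by (\ref{explogn_nplus}) for $(\vee_l)$.

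These two branching cases reveal what I regard as the first real obstacle: HS has no explicit conjunction-introduction rule, so I must show that HS-derivability is closed under products, i.e. that derivations of $c_1$ and of $c_2$ can be combined into a derivation of $\ntimes{c_1}{c_2}$. The clean way to arrange this is to read a derivation of a $\CNF$ $\prod_i b_i^{c_i}$ as a family of derivations of its factors, so that products compose transparently; the same device supplies the conjunctive premise $\ntimes{\cdots}{\cdots}$ demanded by the HS $(\to_l^\to)$ rule. With this in hand, the non-invertible cases $(\text{axiom})$, $(\vee^1_r)$, $(\vee^2_r)$, $(\to_l^P)$ follow by direct computation; for instance $\enf{\Gamma\to F\vee G} = \explogone{(\enfpos{F}\plus\enfpos{G})}{\enf{\Gamma}}$ by (\ref{distrib1_top}) and (\ref{ntimes_top}), which is precisely the HS conclusion, while the premise $\explog{\enfpos{F}}{\enf{\Gamma}} = \enf{\Gamma\to F}$ matches that of the $(\vee^1_r)$ rule.

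The hard part, as already flagged before the statement, will be the $(\to_l^\to)$ case. Here the normal form of the G4ip conclusion $I^{H^{G^F}\Gamma}$ and of its two premises $(G^F)^{H^G\Gamma}$ and $I^{H\Gamma}$ must be computed and shown, factor by factor, equal to the elaborate $\explog{}{}$-expressions appearing in the HS rule; this requires chaining the exponent-manipulation equations (\ref{explogn_explog1}), (\ref{explogn_distrib}), (\ref{distrib1_distrib}), (\ref{distrib1_distrib1}) with the distributivity laws (\ref{distrib0_nplus})--(\ref{distribn_distrib}), and it is exactly where the ordering discipline enforced by the auxiliary function $\enfpos{-}$ -- keeping (\ref{hsi:exp:2}) ahead of (\ref{hsi:distr}) at the base of an exponentiation -- becomes indispensable. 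I expect the bookkeeping of this single case, rather than any conceptual difficulty, to dominate the argument; everything else is a routine combination of the induction hypothesis with the equalities of Lemma~\ref{lem:equations}.
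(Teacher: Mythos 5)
Your proposal is correct and takes essentially the same route as the paper's proof: induction on the G4ip derivation, invertible rules dissolved by the formal equalities of Lemma~\ref{lem:equations}, non-invertible rules matched by their like-named HS rules; and your reading of a derivation of a product as a family of derivations of its factors is precisely the paper's implicit identification of meta-level conjunction (several premises) with the object-level product, so that obstacle is real but resolved the same way. Two corrections of detail are in order. First, your equation list for the single-premise invertible rules does not cover $(\wedge_l)$: that case needs the associativity of $\ltimes$, i.e.~equation (\ref{distrib_assoc}), to identify $\explog{\enfpos{H}}{(\distrib{(\distrib{\enf{F}}{\enf{G}})}{\enf{\Gamma}})}$ with $\explog{\enfpos{H}}{(\distrib{\enf{F}}{(\distrib{\enf{G}}{\enf{\Gamma}})})}$; your other attributions agree with the paper's (in particular the paper handles $(\vee_l)$ by (\ref{explogn_distribn_nplus}), which is proved from exactly your pair (\ref{distribn_nplus}) and (\ref{explogn_nplus})). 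Second, and more importantly, $(\to_l^\to)$ is not where the work lies: the HS rules of Figure~\ref{fig:hs} are deliberately stated in terms of $\enf{-}$ and $\enfpos{-}$ applied to the G4ip formulas, so after the preliminary simplifications (\ref{ntimes_top}) and (\ref{distrib1_top}) the HS rule's conclusion and premise coincide with the normal forms of the G4ip conclusion and premises by mere unfolding of the definitions in Figure~\ref{fig:norm} (using the defining clause $\distrib{c_1}{e_2}=\distribone{c_1}{e_2}$), with no chaining of (\ref{explogn_explog1}), (\ref{distrib1_distrib}), or the distributivity laws required. The paper accordingly dispatches all non-invertible cases ``directly by their HS correspondent rule''; the entire force of Lemma~\ref{lem:equations} is spent on the invertible cases, which is the opposite of where you expect the bookkeeping to dominate.
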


  \begin{proof} The proof is by induction on the derivation. The
    premises are connected by ``$\times$''. At each case, we first
    apply (\ref{ntimes_top}) and (\ref{distrib1_top}) of
    Lemma~\ref{lem:equations} to slightly simplify the involved
    expressions. Then, the non-invertible rules, (axiom),
    ($\vee_r^1$), ($\vee_r^2$), ($\to_l^{P}$), and ($\to_l^{\to}$),
    are directly proven by their HS correspondent rule. As for the
    invertible rules, there is no need to use HS rules to interpret
    them, because applying the normalization function $\enf{-}$ on
    formulas is enough, more precisely:
    \begin{itemize}
    \item ($\to_r$) is proven by (\ref{explogn_distrib});
    \item ($\wedge_r$) is proven by (\ref{explogn_ntimes});
    \item ($\wedge_l$) is proven by (\ref{distrib_assoc});
    \item ($\vee_l$) is proven by (\ref{explogn_distribn_nplus});
    \item ($\to_l^{\wedge}$) is proven also by (\ref{explogn_distrib});
    \item and, ($\to_l^{\vee}$) is proven by (\ref{explogn_nplus}) (and (\ref{distrib_assoc})).
    \end{itemize}
  \end{proof}
  
  Although it may appear to be complex to transform G4ip proofs to HS
  proofs, when one wants to formally apply the normalization functions
  of Figure~\ref{fig:norm}, this transformation is actually quite easy
  and can be efficiently also performed by hand using high-school
  arithmetic. We give an example to show how it works.

  \begin{example}
    The following G4ip derivation of
    $r \wedge (q\to (r\vee t)\to s) \to q\to s$,
    \[
      \infer[(\to_r)]{\left(s^q\right)^{r \left(s^{r+t}\right)^q}}{
        \infer[(\to^P_l)]{s^{q r \left(s^{r+t}\right)^q}}{
          \infer[(\to^\vee_l)]{s^{q r s^{r+t}}}{
            \infer[(\to^P_l)]{s^{q r s^r s^t}}{
              \infer[\text{axiom}]{s^{q r s s^t}}{}
            }
          }
        }
      }
    \]
    is mapped to the following HS derivation:
    \[
        \infer[(\to^P_l)]{s^{q r s^{rq} s^{tq}}}{
            \infer[(\to^P_l)]{s^{q r s^r s^t}}{
              \infer[\text{axiom}]{s^{q r s s^t}}{}
          }
        }
      .
    \]
  \end{example}

  We end this section by establishing in the following
  Theorem~\ref{thm:hs2lj} that HS can be seen as a \emph{fragment} of
  G4ip, one that is complete for provability even if it does not
  contain invertible proof rules.  As a corollary, by composition of
  Theorem~\ref{thm:embed} and Theorem~\ref{thm:hs2lj}, one gets that
  all G4ip proofs can be mapped into a fragment of G4ip proofs, this
  mapping being proof-identity-preserving.
  \begin{theorem}\label{thm:hs2lj}
    Suppose that HS proves the formula $c$. 

    Let $\overline{a}$ denote the formula that is obtained from an
    expression $a$ using the symbols
    $\{\ltimes, \rtimes, \plus, \enf{\cdot}, \enfpos{\cdot},
    \explogone{}{}, \explog{}{}\}$, by replacing $\ltimes$ and
    $\rtimes$ by $\wedge$, $\plus$ by $\vee$, $\enf{I}$ and
    $\enfpos{I}$ by $I$, $\explogone{j}{k}$ by $k\to j$, and
    $\explog{j}{k}$ by $k\to j$.
    
    Then G4ip proves the formula $\overline{c}$.
  \end{theorem}
  \begin{proof}
    This can be proved formally by induction on the derivation, but
    the translation is straightforward: one keeps in G4ip essentially
    the same tree of proof rules from HS, but replaces the premise $p$
    and conclusion $c$ by the premise $\overline{p}$ and conclusion
    $\overline{c}$.
    
    That means in particular that, when $c$ is in one of the possible
    forms $\explogone{p}{(\distribone{p}{e}})$, or
    $\explogone{(c_1 \plus c_2)}{e}$, or
    $\explog{c}{(\distribone{(\explog{\enfpos{F}}{p})}{(\distribone{p}{e})})}$,
    or
    $\explog{c}{(\distribone{(\explog{\enfpos{H}}{(\explog{\enfpos{G}}{e_1})})}{e_2})}$,
    then $\overline{c}$ is correspondingly of form
    ${p}\wedge{\overline{e}}\to{p}$, or
    ${\overline{e}}\to{(\overline{c_1} \vee \overline{c_2})}$, or
    ${{({p}\to{F})}\wedge{({p}\wedge{\overline{e}})}}\to\overline{c}$,
    or
    ${((\overline{e_1}\to G)\to
      H)\wedge{\overline{e_2}}}\to\overline{c}$.
  \end{proof}
  
%%% Local Variables:
%%% mode: latex
%%% TeX-master: "high-school"
%%% End:

\section{The Inequality Interpretation of Proof Rules}
\label{sec:inequality}
\allowdisplaybreaks
\def\interp#1{[\![#1]\!]}
\def\squigto{\quad\rightsquigarrow\quad}
In this section, we show that the inference rules for G4ip % and HS
can be interpreted as inequalities relating the exponential
polynomials corresponding to the premises and the conclusion. This extends the previous observation that invertible rules are equalities.

We start by exploring the inequality interpretation of G4ip, in order
to keep the presentation somewhat simple.
The main result is the following.
\begin{theorem}\label{thm:ruledecreasing}
  Let $\mathcal R$ be an inference rule of G4ip.
  If the variables $F,G,H,I,P$ are interpreted to be natural numbers
  strictly greater than $1$, then the value of the premise of the rule
  is less than or equal to the value of the conclusion. Moreover, the inequality
  is strict if and only if $\mathcal R$ is not invertible.
\end{theorem}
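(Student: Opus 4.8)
The plan is to verify the inequality rule-by-rule, treating each of the ten inference rules of G4ip separately. For the invertible rules ($\to_r$, $\wedge_r$, $\vee_l$, $\wedge_l$, $\to_l^\wedge$, $\to_l^\vee$), we have already established in Section~\ref{sec:highschool} that premise and conclusion are \emph{isomorphic}, hence by~(\ref{eq:hsivalid}) they satisfy $\mathbb{N}^+\vDash F=G$; in particular equality of values holds for every interpretation in $\{n\in\mathbb{N}\mid n\ge 2\}$, so the weak inequality holds with equality and is never strict. This immediately gives the ``only if'' half of the final claim and disposes of half the rules. It remains to handle the four non-invertible rules (axiom, $\vee_r^1$, $\vee_r^2$, $\to_l^P$, $\to_l^\to$) and show that each yields a \emph{strict} inequality once all variables are $\ge 2$.

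Next I would treat the easy non-invertible rules by direct arithmetic. For (axiom) the premise is empty, so there is nothing to compare beyond well-definedness; for the two $\vee_r$ rules the conclusion is $(F+G)^\Gamma$ and the premise is $F^\Gamma$ (resp.\ $G^\Gamma$), and since $F,G\ge 2$ we have $F+G>F$ and $F+G>G$, whence raising to the positive power $\Gamma$ preserves strictness, giving $F^\Gamma<(F+G)^\Gamma$ (resp.\ for $G$). For ($\to_l^P$) the two sides are $G^{FP\Gamma}$ and $G^{F^P P\Gamma}$; because $P\ge 2$ and $F\ge 2$ we have $F^P>F$, so the exponent strictly increases and, since the base $G\ge 2>1$, the value strictly increases as well. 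In each of these cases the key facts are merely that $x+y>\max(x,y)$ and $x^y>x$ for $x,y\ge 2$, together with strict monotonicity of $n\mapsto n^m$ and $m\mapsto n^m$ for base and exponent exceeding $1$.

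The hard part will be the rule ($\to_l^\to$), where the premise is $(G^F)^{H^G\Gamma}\,I^{H\Gamma}$ and the conclusion is $I^{H^{G^F}\Gamma}$, and no single-variable monotonicity argument suffices: one must show
\[
  (G^F)^{H^G\Gamma}\cdot I^{H\Gamma} \;<\; I^{H^{G^F}\Gamma}
\]
for all $F,G,H,I\ge 2$. Here I would take logarithms to reduce the claim to an additive inequality comparing $H^G\cdot\Gamma\cdot F\log G + H\cdot\Gamma\cdot\log I$ against $H^{G^F}\cdot\Gamma\cdot\log I$, and then bound each summand on the left by a suitable fraction of the right-hand term, exploiting that $G^F\ge G+F$ (indeed $G^F$ grows far faster) so that $H^{G^F}$ dominates both $H^G$ and $H$ by a wide margin when $H\ge 2$. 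The combinatorial delicacy flagged in the paper (``the case of $(\to_l^\to)$ is not obvious'' and ``a careful combinatorial treatment is given'') is exactly in making these dominations uniform over all admissible $F,G,H,I$ and the context $\Gamma$; I expect this to require a carefully chosen case split or an auxiliary lemma establishing a crude but sufficient lower bound on $H^{G^F}$ relative to $H^G$ and to $G^F$, so that the two premise terms can be absorbed. Assembling the rule-by-rule results then yields both the weak inequality in all cases and the strictness exactly for the non-invertible rules, completing the theorem.
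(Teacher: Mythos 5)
Your handling of the invertible rules and of the easy non-invertible rules coincides with the paper's: the invertible rules are value-preserving equalities (instances of the HSI), $(\vee_r^1)$ and $(\vee_r^2)$ follow from $F+G>F$ and $F+G>G$, and $(\to_l^P)$ from $F^P=F^2>F$ plus monotonicity of exponentiation (for the axiom, note that the empty product of premises is $1<P^{P\Gamma}$, so strictness does hold there as well, rather than there being ``nothing to compare''). Your logarithmic reduction of $(\to_l^\to)$ is also, in substance, the paper's own reduction: the paper proves a lemma stating that $I^{H^{G^F}\Gamma}>(G^F)^{H^G\Gamma}I^{H\Gamma}$ for all $F,G,H,I\ge2$ and $\Gamma\ge1$ follows from the single inequality $2^{H^{G^F}-H}>G^{FH^G}$ for $F,G,H\ge2$, and that inequality is exactly the log-base-$2$ form of your additive inequality in the worst case $I=2$ (the context $\Gamma$ cancels on both sides). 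The only methodological difference is that the paper stays within integer arithmetic and high-school identities instead of passing to real logarithms.

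The genuine gap is that you never prove this key inequality; you explicitly defer it (``I expect this to require a carefully chosen case split or an auxiliary lemma\ldots''), and that deferred step is the entire mathematical content of the theorem --- it is precisely the part the paper flags as ``not obvious''. The paper's proof of it is not short: it establishes three auxiliary integer inequalities, namely $G^F-G-1\ge G^{F-1}$ and $2^{G^{F-1}}\ge FG$ (the latter by induction on $F$), both under the hypothesis $G\ge3$, and $FH^GG\ge FH^{G-1}G+1$, and then chains them by successive multiplications with $H^G$ and $H$. Crucially, the case $G=2$ must be handled by a completely separate argument (an induction showing $2^{2^F-2}-F>1$), because the first auxiliary inequality is false there: for $G=F=2$ one has $G^F-G-1=1<2=G^{F-1}$. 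This is exactly where your ``dominates by a wide margin'' heuristic fails --- near the boundary $G=2$ the domination is not uniform, and a fraction-absorption argument cannot go through without a case split. So the skeleton of your proposal is sound and parallel to the paper's, but the core inequality is asserted rather than proven, and proving it is where all the work lies.
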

Note that for inference rules with multiple premises, we take the interpretation
to be the \emph{product} of the interpretations of the individual premises.
\begin{remark}
  We should clarify that our notion of \emph{invertibility} differs slightly
  from that of Dyckhoff. In \cite{Dyckhoff1992}, the $(\to_l^P)$ rule is considered
  invertible, as any proof of $(P\to F), P, \Gamma \vdash G$ may be transformed into a proof
  of $F, P, \Gamma \vdash G$. From our point of view, however, the $(\to_l^P)$ rule is
  \emph{not} invertible, because we consider invertibility to be a property of
  \emph{formulas} rather than rules. Given the sequent $(P\to F), \Gamma \vdash G$, we
  \emph{may} be able to decompose $P\to F$ immediately, or we may not, depending
  on whether $P$ is present in the context $\Gamma$. Because we cannot guarantee that
  the formula can be decomposed, we consider the rule to be non-invertible.

  A different way of seeing this is to consider the following formulation of the
  $(\to_l^P)$ rule:
  \begin{align*}
      \tag{${\to_l^P}'$} \myinfer{(P\to F),\Gamma\vdash G}{F,\Gamma\vdash G\qquad P\in\Gamma}
  \end{align*}

  This rule is equivalent to the usual rule, and it is straightforward to
  convert between proofs using the former rule and proofs using the latter.
  Note, however, that this rule is \emph{not} invertible in the traditional
  sense. While a proof of the conclusion certainly implies that there exists a
  proof of the first premise, this is not the case for the second premise.
\end{remark}

To present the inequality interpretation, we define an interpretation function $\interp{-}$ that maps
formulas and contexts to natural
numbers. The function is defined as follows:

\begin{align*}
  \interp{F\vee G}&=\interp{F}+\interp{G}&
  \interp{F\wedge G}&=\interp{F}\cdot\interp{G}\\
  \interp{G\to F}&=\interp{F}^{\interp{G}}&
  \interp{\Gamma,F}&=\interp{\Gamma}\cdot\interp{F}\\
  \interp{P}&=2 &\interp{\top}&=\interp{\cdot}=1
\end{align*}
Note that with this interpretation, we have the following property:
\begin{lemma}\label{lem:one-is-top}
  If $\interp{F}=1$ then $F\cong \top$.
\end{lemma}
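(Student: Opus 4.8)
The plan is to prove the contrapositive-free statement directly, by structural induction on $F$, after first recording an auxiliary monotonicity fact about the interpretation. Concretely, I would first establish by a routine induction that $\interp{F}\ge 1$ for every formula $F$: the base cases are $\interp{\top}=1$ and $\interp{P}=2$, and the three inductive cases are immediate, since a sum of two positive integers is positive, a product of two positive integers is positive, and a power $\interp{H}^{\interp{G}}$ with positive base (and nonnegative exponent) is positive. This lower bound is what turns ``the value is $1$'' into strong structural information, because $1$ admits no nontrivial factorization, sum, or power among the positive integers.

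With this in hand I would run the main induction on the structure of $F$ under the hypothesis $\interp{F}=1$. If $F=\top$ there is nothing to prove. If $F=P$ is prime (this subsumes $\bot$, treated atomically per the earlier footnote) or $F=G\vee H$, the hypothesis is simply impossible, since $\interp{P}=2$ and $\interp{G\vee H}=\interp{G}+\interp{H}\ge 2$; these cases are therefore vacuous. If $F=G\wedge H$, then $\interp{G}\cdot\interp{H}=1$ with both factors positive integers, forcing $\interp{G}=\interp{H}=1$, so the induction hypothesis yields $G\cong\top$ and $H\cong\top$, whence $F\cong\top\wedge\top\cong\top$, using that $\cong$ is a congruence together with the unit isomorphism $\top\wedge\top\cong\top$ obtained from~(\ref{hsi:multunit}) via~(\ref{eq:hsivalid}). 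If $F=G\to H$, then $\interp{H}^{\interp{G}}=1$ with exponent $\interp{G}\ge 1$; since $\interp{H}\ge 2$ would already give $\interp{H}^{\interp{G}}\ge 2$, we must have $\interp{H}=1$, so the induction hypothesis gives $H\cong\top$ and hence $F\cong G\to\top\cong\top$, invoking the identity $1^{F}=1$ of~(\ref{hsi:onepower}) through~(\ref{eq:hsivalid}).

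The only step that merits a moment's care — the mild obstacle — is the implication case, where one must notice that a power of positive integers equals $1$ exactly when its base is $1$ (this is precisely where the bound $\interp{G}\ge 1$ on the exponent is used) and then recognize that the resulting isomorphism $G\to\top\cong\top$ is nothing other than the high-school identity $1^{F}=1$. Every remaining case collapses to the single arithmetic observation that $1$ has no nontrivial factorizations or summands in the positive integers, so I expect the write-up to be short and almost entirely bookkeeping once the auxiliary bound $\interp{F}\ge 1$ is in place.
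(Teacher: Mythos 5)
Your proof is correct and follows essentially the same route as the paper, which simply states that the lemma holds ``by a straightforward induction on the structure of formulas.'' Your write-up fills in exactly the details that straightforward induction requires --- the auxiliary bound $\interp{F}\ge 1$, the vacuous prime and disjunction cases, and the use of the unit isomorphisms \eqref{hsi:multunit} and \eqref{hsi:onepower} via \eqref{eq:hsivalid} in the conjunction and implication cases --- so there is nothing to add.
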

\begin{proof}
  By a straightforward induction on the structure of formulas.\qedhere
\end{proof}
This observation justifies our assumption that when we apply an
inference rule, the variables involved must have a value greater than
or equal to $2$. For instance, consider the $(\to_l^\to)$ rule. Here, we could have $F,G,H=P$,
$\Gamma=\cdot$, and $I=\top$, which would result in the following
interpretation:
\[
\myinfer{1^{2^{2^2}\cdot1}}{(2^2)^{2^2\cdot1}\cdot1^{2\cdot1}}
\]
Clearly, the premise has a greater value than the conclusion, hence the
inequality interpretation does not work, unless we assume all the
variables (except the one corresponding to $\Gamma$) have values not
less than $2$.

Note, however, that this is an entirely reasonable assumption given
the content of Lemma~\ref{lem:one-is-top}. If $I=\top$, there is no
reason to apply the $(\to_l^\to)$ rule, as we already know $\top$ is
provable\footnote{In fact, modern presentations of G4ip \cite{troelstra2000basic} omit $\top$ as
a formula entirely, as it --- from a proof search perspective --- is
completely superfluous.}. Thus, we assume that the formulas and contexts in
question have been subjected to the following simplification rules
first:
\begin{align*}
  \top\wedge F &\squigto F&F\wedge\top&\squigto F\\
  % \top\wedge F &\squigto\top&F\wedge\top&\squigto F\\
%  \top\vee F &\squigto\top&\top\wedge F&\squigto F\\ % Disjunction
%  not needed.
  \top\to F&\squigto F&F\to\top&\squigto\top\\
\Gamma,\top&\squigto\Gamma
\end{align*}
These simplifications correspond to the high-school
identities~\eqref{hsi:multunit}, \eqref{hsi:powerone},
and~\eqref{hsi:onepower}.
Note that we do not need to reduce occurrences of $\top$ inside
disjunctions, as $\interp{F\vee G}\ge2$ for all formulas $F,G$. This
leads to the following lemma:
\begin{lemma}
  For all formulas $F$, either $\interp{F}\ge2$ or
  $F\rightsquigarrow^*\top$.
\end{lemma}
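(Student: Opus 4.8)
The plan is to prove the statement by structural induction on $F$, after first establishing the auxiliary fact that $\interp{F}\ge 1$ for every formula $F$. This lower bound is what makes the inductive step go through: it ensures that multiplying by an interpretation, or raising a base to the power of an interpretation, never drops a value below what one starts with.

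First I would prove, by a routine induction on $F$, that $\interp{F}\ge 1$ always holds. The base cases give $\interp{P}=2$ and $\interp{\top}=1$, both at least $1$; a disjunction sends a sum of values $\ge 1$ to a value $\ge 2$; a conjunction sends a product of values $\ge 1$ to a value $\ge 1$; and an implication sends $\interp{H}^{\interp{G}}$, with $\interp{H}\ge 1$ and $\interp{G}\ge 1$, to a value $\ge 1$.

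Then I would carry out the main induction. The two base cases are immediate: a prime $P$ has $\interp{P}=2$, and $\top$ reduces to itself. For a disjunction $G\vee H$ the auxiliary fact gives $\interp{G\vee H}=\interp{G}+\interp{H}\ge 2$ unconditionally, matching the remark that disjunctions never need simplifying. For a conjunction $G\wedge H$ I would split on the induction hypotheses: either both $G$ and $H$ reduce to $\top$, in which case $G\wedge H\rightsquigarrow^*\top\wedge\top\rightsquigarrow\top$; or at least one of them does not, so by its induction hypothesis its interpretation is $\ge 2$, say $\interp{G}\ge 2$, and then $\interp{G\wedge H}=\interp{G}\cdot\interp{H}\ge 2\cdot 1=2$ by the auxiliary fact applied to the other factor. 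For an implication $G\to H$ I would split only on the induction hypothesis for $H$: if $H\rightsquigarrow^*\top$ then $G\to H\rightsquigarrow^* G\to\top\rightsquigarrow\top$; and if $\interp{H}\ge 2$ then $\interp{G\to H}=\interp{H}^{\interp{G}}\ge 2^{1}=2$, again because $\interp{G}\ge 1$.

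The only point needing care — and the closest thing to an obstacle — is that the rewrites above take place under a surrounding connective: to pass from $H\rightsquigarrow^*\top$ to $G\wedge H\rightsquigarrow^* G\wedge\top$, and similarly for implication, I am using that $\rightsquigarrow$ is a congruence, i.e. that it may be applied inside subformulas. I would therefore record (or take as given) that $\rightsquigarrow^*$ is closed under the formula-forming operations, after which each case of the induction is a one-line verification; no appeal to the earlier results is needed beyond the definition of $\interp{-}$ and the simplification rules themselves.
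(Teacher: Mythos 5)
Your proof is correct and follows essentially the same route as the paper's: a structural induction on $F$, splitting in each case on whether the relevant subformula reduces to $\top$ or has interpretation $\ge 2$, exactly as in the paper's representative implication case $\interp{G\to H}=\interp{H}^{\interp{G}}\ge 2^{\interp{G}}\ge 2$. Your two additions --- the explicit auxiliary bound $\interp{F}\ge 1$ and the remark that $\rightsquigarrow$ must be taken as a congruence so rewriting can happen under a connective --- are points the paper uses implicitly (``by using the fact that $\interp{G}\ge1$'' and ``by the definition of $\rightsquigarrow$''), so making them explicit only tightens the same argument.
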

\begin{proof}
  By induction on the structure of $F$. We show here a representative
  case. If $F=G\to H$, we apply the induction hypothesis to $H$. If
  $H\rightsquigarrow^*\top$, then $F\rightsquigarrow^*\top$ by the
  definition of $\rightsquigarrow$. If not, we have $\interp{H}\ge2$,
  and thus
  \[\interp{G\to H} = \interp{H}^{\interp{G}}\ge2^{\interp{G}}\ge2,\]
  by using the fact that $\interp{H}\ge2$ and $\interp{G}\ge1$
  respectively.
\end{proof}
Alternatively, one can simply replace all occurrences of $\top$ with
any formula with a unique proof, such as $P\to P$ for some fresh
atomic formula $P$.

In the rest of this section, we will omit the interpretation function,
as it will be obvious from the context whether we are talking about
the formula or the interpretation to which it is mapped.

Let us now return to the inference rules of G4ip. The fact that the
invertible rules preserve the value of the sequents is immediate by
inspection of the inference rules. For instance, for the $(\vee_l)$
rule, we would need to show that 
\[H^{(F+G)\Gamma}=H^{F\Gamma}H^{G\Gamma},\]
but this is a simple arithmetical equality. This leaves the matter of
establishing the non-invertible rules as strict inequalities. For the
$(\vee_r^1)$ and $(\vee_r^2)$ rules, this is immediate, as $F+G>F$ and
$F+G>G$ whenever $F,G\ge1$.

For the $(\to_l^P)$ rule, since we do not apply rules that do nothing
after the simplification, we have that $F\not\rightsquigarrow^*\top$ and thus $F\ge2$.
As $P$ is an atomic formula, its interpretation is $2$, and thus
\begin{align*}
  F^P&=F^2 >F&&\text{and hence}\\
  G^{F^PP\Gamma}&>G^{FP\Gamma}&&\text{by monotonicity.}
\end{align*}

This leaves the inequality associated to the $(\to_l^\to)$ rule, for
which we will need a few lemmas first:
\begin{lemma}
  If the inequality $2^{H^{G^F}-H} > G^{FH^G}$ holds for all
  $G,H,F\ge2$, then $I^{H^{G^F}\Gamma} >
  (G^F)^{H^G\Gamma}I^{H\Gamma}$ for all $F,G,H,I\ge2$ and $\Gamma\ge1$.
\end{lemma}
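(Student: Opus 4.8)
The plan is to reduce the full inequality to the hypothesized one-variable inequality by peeling off the contributions of $I$ and $\Gamma$ through monotonicity. I would start from the claimed conclusion $I^{H^{G^F}\Gamma} > (G^F)^{H^G\Gamma}I^{H\Gamma}$ and try to reorganize it so that the hypothesis $2^{H^{G^F}-H} > G^{FH^G}$ becomes directly applicable after raising both sides to suitable exponents.

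\medskip

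First I would divide both sides of the target inequality by the common factor $I^{H\Gamma}$, which is legitimate since it is positive; since $I\ge 2$ and $\Gamma\ge 1$, the exponents combine to give $I^{(H^{G^F}-H)\Gamma}$ on the left, so the goal becomes
\[
I^{(H^{G^F}-H)\Gamma} > (G^F)^{H^G\Gamma}.
\]
Here I would use $I\ge 2$ to bound the left-hand side from below by $2^{(H^{G^F}-H)\Gamma}$, reducing the problem to showing $2^{(H^{G^F}-H)\Gamma} \ge (G^F)^{H^G\Gamma}$. Since both sides are of the form $X^\Gamma$ with $\Gamma\ge 1$ and the bases positive, by monotonicity of $x\mapsto x^\Gamma$ it suffices to establish the statement for the bases, i.e. $2^{H^{G^F}-H} \ge G^{FH^G}$. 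Finally, rewriting $(G^F)^{H^G} = G^{FH^G}$ via the high-school identity~\eqref{hsi:exp:3}, this is exactly the hypothesis (with $\ge$ weakened from the strict $>$), so the chain of inequalities closes.

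\medskip

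The main obstacle, and the place requiring genuine care, is ensuring that each monotonicity step preserves strictness correctly and that the exponent $H^{G^F}-H$ is nonnegative, so that raising $2$ to it is meaningful and the direction of the bound $I\ge 2$ is the right one. Nonnegativity of $H^{G^F}-H$ follows from $G^F\ge 1$ together with $H\ge 2$, which makes $H^{G^F}\ge H$; I would record this explicitly so that the division step and the substitution of the lower bound $2$ for $I$ are both justified. Once the exponent is known to be nonnegative, the strict inequality in the hypothesis propagates through: even after weakening $2^{H^{G^F}-H} > G^{FH^G}$ to $\ge$ to cover the base comparison, the strictness of the original target is recovered from the strict bound obtained when passing from $I^{(\cdots)\Gamma}$ to $2^{(\cdots)\Gamma}$, provided the exponent is strictly positive — which again reduces to $G^F > 1$, guaranteed since $G,F\ge 2$.
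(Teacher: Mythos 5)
Your overall route is sound, and it is essentially the paper's own chain of inequalities run in reverse: the paper starts from the hypothesis, replaces $2$ by $I$ (using $I\ge 2$), raises both sides to the power $\Gamma$, distributes the exponent, multiplies by $I^{H\Gamma}$, and finishes with the identity $(G^F)^{H^G\Gamma}=G^{FH^G\Gamma}$; you peel exactly these steps off the goal one by one. The individual reductions --- dividing by $I^{H\Gamma}$, bounding $I^{(H^{G^F}-H)\Gamma}\ge 2^{(H^{G^F}-H)\Gamma}$, and comparing bases under $(\cdot)^\Gamma$ --- are all legitimate, and your attention to the nonnegativity of the exponent $H^{G^F}-H$ is well placed.

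However, your bookkeeping of strictness contains a genuine error. You weaken the base comparison to $2^{H^{G^F}-H}\ge G^{FH^G}$ and then claim that the strictness of the target ``is recovered from the strict bound obtained when passing from $I^{(\cdots)\Gamma}$ to $2^{(\cdots)\Gamma}$, provided the exponent is strictly positive.'' That bound is \emph{not} strict: when $I=2$ --- a value explicitly allowed by the lemma --- we have $I^{(H^{G^F}-H)\Gamma}=2^{(H^{G^F}-H)\Gamma}$ exactly, no matter how large the exponent is, so for $I=2$ your chain only yields $\ge$ where $>$ is required. The repair is immediate and removes the detour entirely: do not weaken the hypothesis. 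Since $2^{H^{G^F}-H}>G^{FH^G}\ge 1$ and $\Gamma\ge 1$, strict monotonicity of $x\mapsto x^\Gamma$ on positive numbers gives
\[
2^{(H^{G^F}-H)\Gamma}=\bigl(2^{H^{G^F}-H}\bigr)^\Gamma>\bigl(G^{FH^G}\bigr)^\Gamma=(G^F)^{H^G\Gamma},
\]
and then $I^{(H^{G^F}-H)\Gamma}\ge 2^{(H^{G^F}-H)\Gamma}>(G^F)^{H^G\Gamma}$ closes the argument, with the strict inequality carried by the hypothesis itself rather than by the $I$-versus-$2$ step --- which is precisely how the paper's forward-direction proof keeps it strict throughout.
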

\begin{proof}
We reason as follows:
\begin{align*}
  2^{H^{G^F}-H} &> G^{FH^G} &&\text{by assumption.}\\
  I^{H^{G^F}-H} &> G^{FH^G} &&\text{as $I\ge2$.}\\ 
  I^{(H^{G^F}-H)\Gamma} &> G^{FH^G\Gamma} &&\text{by raising each side to the power $\Gamma$.}\\ 
  I^{H^{G^F}\Gamma-H\Gamma} &> G^{FH^G\Gamma} &&\text{by distributivity.}\\
  I^{H^{G^F}\Gamma} &> G^{FH^G\Gamma}I^{H\Gamma}&&\text{by multiplying with $I^{H\Gamma}$.}\\
  I^{H^{G^F}\Gamma} &> (G^F)^{H^G\Gamma}I^{H\Gamma}&&\text{by the high-school identity.}
\end{align*}
\qedhere
\end{proof}
Next, we need a few further lemmas in order to discharge the
assumption in the preceding lemma:
\begin{lemma}\label{monotone} The following inequalities hold for $G\ge 2$:
  \begin{gather*}
    \forall F\ge 1\left(2^{G^{F+1}}-2^{G^F}\ge 2^{G^2}-2^{G}\right)\\
    \forall F\ge 2\left(2^{G^{F+1}}-2^{G^F}\ge 2^{G^3}-2^{G^2}\right).
  \end{gather*}
\end{lemma}
\begin{proof}
  Both statements are proved by induction on $F$, and for both
  induction cases it is enough to have the following inequality, for
  $F\ge 1$:
  \[
    2^{G^{F+2}}-2^{G^{F+1}}\ge 2^{G^{F+1}}-2^{G^F},
  \]
  that is,
  \[
    2^{G^{F+2}}\ge 2^{G^{F+1}+1}-2^{G^F}.
  \]
  We can actually prove the stronger statement
  \[
    2^{G^{F+2}}\ge 2^{G^{F+1}+1}
  \]
  by using the monotonicity of the $\log_2$ function, since
  $\log_2{2^{G^{F+2}}} = G^{F+2}$,
  $\log_2{2^{G^{F+1}+1}} = G^{F+1}+1$, and because
  \[
    G^{F+2}\ge G^{F+1}+1
  \]
  clearly holds when $G\ge 2$.
\end{proof}
\begin{lemma}
Given $F,H\ge2$ and $G\ge3$, the following inequalities hold:
\begin{equation}
  \label{eq:gfggegf1}
  G^F-G-1\ge G^{F-1}
\end{equation}
\begin{equation}
  \label{eq:2gf1gefg}
  2^{G^{F-1}}\ge FG
\end{equation}
\begin{equation}
  \label{eq:add1trick}
  FH^GG\ge FH^{G-1}G+1
\end{equation}
\end{lemma}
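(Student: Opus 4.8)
The plan is to prove the three inequalities independently, since none of them depends on the others and the bound variable $H$ occurs only in \eqref{eq:add1trick}. Two of them, \eqref{eq:gfggegf1} and \eqref{eq:add1trick}, are elementary manipulations, whereas \eqref{eq:2gf1gefg} carries the real content and will be the main obstacle: its left-hand side is a tower $2^{G^{F-1}}$, while the right-hand side $FG$ grows without bound in both $F$ and $G$, so the argument must make the doubly-exponential growth visible.

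For \eqref{eq:gfggegf1}, I would rewrite the gap as $G^F - G^{F-1} = (G-1)G^{F-1}$, so that the claim becomes $(G-1)G^{F-1} \ge G+1$. Using $F \ge 2$ to get $G^{F-1} \ge G$, it suffices to show $(G-1)G \ge G+1$, i.e. $G^2 - 2G - 1 \ge 0$, which holds for all $G \ge 3$ (the quadratic is already positive at $G=3$ and increasing thereafter).

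For \eqref{eq:2gf1gefg}, the plan is to exploit that the exponent $G^{F-1}$ already dominates $FG$, so that the cheap estimate $2^n \ge n^2$ (valid for $n \ge 4$) leaves ample room. Concretely, I would first record the auxiliary bounds $G^{F-1} \ge F$ (from $G \ge 2$ and $2^{F-1} \ge F$) and $2(F-1) \ge F$ (from $F \ge 2$), which together yield $G^{2(F-1)} \ge G^F \ge FG$. Then, whenever $G^{F-1} \ge 4$, the estimate gives $2^{G^{F-1}} \ge (G^{F-1})^2 = G^{2(F-1)} \ge FG$. Since $G \ge 3$ and $F \ge 2$ force $G^{F-1} \ge 4$ in every case except the single degenerate one $G=3,\ F=2$, it only remains to check that case by hand, where $2^{3} = 8 \ge 6 = FG$. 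The main difficulty here is precisely isolating this degenerate case and arranging the chain of bounds so that the tower on the left can be controlled through $2^n \ge n^2$ rather than a more delicate logarithmic estimate.

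Finally, for \eqref{eq:add1trick}, I would factor the difference of the two sides as $FH^G G - FH^{G-1}G = FGH^{G-1}(H-1)$. Every factor is a positive integer, and with $F \ge 2$, $G \ge 3$, and $H \ge 2$ the product is at least $1$ (indeed considerably larger), which is exactly the claimed inequality.
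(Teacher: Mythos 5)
Your proposal is correct, and the interesting divergence from the paper is in the middle inequality. For \eqref{eq:gfggegf1} and \eqref{eq:add1trick} you are essentially in step with the paper: your factorization $FH^GG - FH^{G-1}G = FGH^{G-1}(H-1) \ge 1$ is literally the paper's argument for \eqref{eq:add1trick}, and for \eqref{eq:gfggegf1} the paper likewise reduces to an elementary fact (it starts from $G^{F-2}\ge 1$, uses $G\ge 3$ to get $G^{F-1}-1 > G^{F-2}$, multiplies by $G$, and invokes integrality), whereas you factor the gap as $(G-1)G^{F-1}\ge G+1$ and check the quadratic $G^2-2G-1\ge 0$ at $G=3$ --- a mild, equally valid variant. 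The genuine difference is \eqref{eq:2gf1gefg}: the paper proves it by induction on $F$, checking the base case $F=2$ (where it becomes $2^{G-1}\ge G$) and then bounding the successive differences of the two sides, namely $2^{G^F}-2^{G^{F-1}} \ge (2^G)^G - 2^G \ge (2^G)^2 - 2^G = 2^G(2^G-1)\ge G = (F+1)G - FG$, so that the left side grows at least as fast as the right along each increment of $F$. You instead make a static reduction: from $2^n\ge n^2$ (valid for $n\ge 4$) applied at $n=G^{F-1}$, together with $(G^{F-1})^2 = G^{2(F-1)}\ge G^F\ge FG$ (using $2(F-1)\ge F$ and $G^{F-1}\ge F$), the inequality follows outright whenever $G^{F-1}\ge 4$; since $G\ge3$ and $F\ge2$ force $G^{F-1}\ge 4$ except in the single case $F=2$, $G=3$, it only remains to verify $2^3=8\ge 6$ by hand, as you do. Both arguments are elementary and complete; yours buys a shorter, induction-free chain (the inductions are hidden in the standard facts $2^n\ge n^2$ and $2^{F-1}\ge F$) at the price of an explicit exceptional case, while the paper's difference estimate is self-contained and needs no case split.
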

\begin{proof}
  To prove~\eqref{eq:gfggegf1}, we reason as follows:
\begin{align*}
  G^{F-2}&\ge G^{2-2}=G^0=1&&\text{as $F\ge 2$.}\\
  3G^{F-2}-1&>G^{F-2}&&\text{as $3n-1>n$ when $n\ge1$.}\\
  G^{F-1}-1&>G^{F-2}&&\text{as $G\ge3$.}\\
  G^F-G &> G^{F-1} &&\text{by multiplying with $G$.}\\
  G^F-G &\ge G^{F-1}+1 &&\text{as $G,F\in\mathbb{N}$.}\\
  G^F-G-1 &\ge G^{F-1}&&\text{by rearranging.}  
\end{align*}
Next, to prove~\eqref{eq:2gf1gefg}, we do this in two steps: First,
we note that the inequality holds when $F=2$. To show this, we need to
show that $2^{G^{2-1}}=2^G\ge 2G$, that is $2^{G-1}\ge G$, which is
clear when $G\ge2$. Next, we observe that if the inequality holds for
some $F$, then it also holds with $F+1$ substituted in place of $F$.
For the right hand side of the inequality, this gives a difference of
$(F+1)G-FG=G$. For the left hand side, we reason as follows:
\begin{align*}
  2^{G^F}-2^{G^{F-1}}&\ge 2^{G^2}-2^{G^{2-1}}&&\text{as $F\ge 2$ and using Lemma~\ref{monotone}.}\\
  &=(2^G)^G-2^G &&\text{by the high-school identities.}\\
  &\ge(2^G)^2-2^G&&\text{as $G\ge2$.}\\
  &=2^G(2^G-1)\\
  &\ge G&&\text{as $2^G\ge G$ and $2^G-1\ge 1$.}
\end{align*}
As we have now established that 
\[2^{G^F}-2^{G^{F-1}}\ge (F+1)G-FG\]
for all $F\ge2$, the desired result follows from a straightforward
induction on $F$. 

Finally, to establish~\eqref{eq:add1trick}, we reason as follows:
\begin{align*}
  FH^{G-1}G(H-1)&\ge 1&&\text{as $F,G,H\ge2$.}\\
  FH^GG&\ge FH^{G-1}G+1&&\text{by rearranging.}
\end{align*}
\end{proof}

We can now establish the final lemma:
\begin{lemma}
  For all $F,G,H\ge2$, we have $2^{H^{G^F}-H} > G^{FH^G}$.
\end{lemma}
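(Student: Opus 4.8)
The plan is to reduce this doubly-exponential inequality to the two elementary estimates \eqref{eq:gfggegf1} and \eqref{eq:2gf1gefg} already established, handling the base $G=2$ separately since those estimates require $G\ge 3$. The guiding intuition is that $2^{x}$ beats $G^{y}$ as soon as $x$ exceeds $y\log_2 G$, and that $\log_2 G\le G-1$; so after replacing $G$ by $2^{G-1}$ it will suffice to compare integer exponents, and the whole strict inequality will be bought by the small but nonzero slack $H^G-H$.

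First I would use the elementary bound $G\le 2^{G-1}$ (valid for $G\ge 1$) to get $G^{FH^G}\le 2^{(G-1)FH^G}$. By strict monotonicity of $2^{-}$ it then suffices to prove the strict exponent inequality
\[(G-1)FH^G + H < H^{G^F}.\]
Here is the key step: since $H<H^G$ (because $G\ge 2$), I have the \emph{strict} estimate
\[(G-1)FH^G + H < (G-1)FH^G + H^G = H^G\bigl((G-1)F+1\bigr),\]
while rewriting the right-hand side via the high-school identity as $H^{G^F}=(H^G)^{G^{F-1}}=H^G\cdot(H^G)^{G^{F-1}-1}$ reduces the claim to the purely \emph{non-strict} bound
\[(G-1)F+1 \le (H^G)^{G^{F-1}-1}.\]
The single strict step above is what makes the final inequality strict, so everything downstream may be proved with $\le$.

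This last bound is where the hypotheses $F,H\ge 2$ and the two lemmas enter. From $H^G\ge 2^G$ and the identity $G\cdot(G^{F-1}-1)=G^F-G$ I obtain $(H^G)^{G^{F-1}-1}\ge 2^{G^F-G}$, so the whole problem collapses to showing $2^{G^F-G}\ge (G-1)F+1$. For $G\ge 3$ this is exactly where \eqref{eq:gfggegf1} and \eqref{eq:2gf1gefg} combine: \eqref{eq:gfggegf1} gives $G^F-G\ge G^{F-1}+1$, whence $2^{G^F-G}\ge 2\cdot 2^{G^{F-1}}\ge 2FG$ by \eqref{eq:2gf1gefg}, and finally $2FG=(G-1)F+F(G+1)\ge (G-1)F+1$. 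For $G=2$ I argue directly: $2^{G^F-G}=2^{2^F-2}\ge 2^{F}\ge F+1=(G-1)F+1$, using $2^F-2\ge F$ and $2^F\ge F+1$ for $F\ge 2$.

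The main obstacle, and the reason the estimate is delicate, is the size of the constant factor. A naive bound such as $G<2^{G}$ loses a full squaring at $G=2$ — it would force the exponent comparison $G\cdot FH^G<H^{G^F}-H$, which already fails at $F=G=H=2$ — so the argument genuinely needs the sharper $G\le 2^{G-1}$ together with the strictly positive slack $H^G-H$. Retaining that slack is precisely what upgrades the chain of non-strict estimates to the required strict inequality, and the case split at $G=2$ is forced because \eqref{eq:gfggegf1} and \eqref{eq:2gf1gefg} are available only for $G\ge 3$.
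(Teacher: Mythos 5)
Your proof is correct --- I checked every step, including the algebra $2FG=(G-1)F+F(G+1)$ and the integer bounds $G\le 2^{G-1}$, $2^F-2\ge F$, $2^F\ge F+1$ --- but it is organized genuinely differently from the paper's. The paper keeps the cruder bound $G<2^G$, so it must reach the larger exponent target $H^{G^F}-H\ge FGH^G$; to manufacture the $-H$ slack it needs the extra auxiliary inequality~\eqref{eq:add1trick} ($FH^GG\ge FH^{G-1}G+1$, used to subtract $1$ before multiplying back by $H$), and strictness is deferred to the very last step via $2^G>G$. You instead take the sharper bound $G\le 2^{G-1}$, harvest strictness early from $H<H^G$, and use the factorization $H^{G^F}=H^G\cdot\left(H^G\right)^{G^{F-1}-1}$ to collapse the problem to the non-strict scalar estimate $2^{G^F-G}\ge (G-1)F+1$; this eliminates~\eqref{eq:add1trick} entirely. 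Both arguments invoke~\eqref{eq:gfggegf1} and~\eqref{eq:2gf1gefg} for $G\ge3$ and must split off $G=2$ because those two inequalities are only available for $G\ge3$, but your $G=2$ case reduces to $2^{2^F-2}\ge F+1$, which follows in one line, whereas the paper's $G=2$ case establishes the stronger $2^{2^F-2}-F>1$ by a successive-differences monotonicity argument. Your closing remark is also accurate and illuminating: with the naive bound $G<2^G$ the required exponent comparison $GFH^G<H^{G^F}-H$ already fails at $F=G=H=2$ (where $16>14$), which is precisely why the paper's $G\ge3$ chain cannot cover $G=2$ and why its separate $G=2$ treatment works only because no base conversion is needed there at all.
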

\begin{proof}
  We first prove this in the case where $G\ge3$:
  \begin{align*}
    2^{G^{F-1}}&\ge FG&&\text{by~\eqref{eq:2gf1gefg}.}\\ 
    2^{G^F-G-1}&\ge FG&&\text{as $G^F-G-1\ge G^{F-1}$ by~\eqref{eq:gfggegf1}.}\\
    H^{G^F-G-1}&\ge FG&&\text{as $H\ge2$.}\\
    H^{G^F-1}&\ge FH^GG&&\text{by multiplying with $H^G$.}\\
    H^{G^F-1}&\ge FH^{G-1}G+1&&\text{by~\eqref{eq:add1trick} and transitivity.}\\
    H^{G^F-1}-1&\ge FH^{G-1}G\\
    H^{G^F}-H&\ge FH^GG&&\text{by multiplying with $H$.}\\
    2^{H^{G^F}-H}&\ge 2^{FH^GG}&&\text{by monotonicity.}\\
    2^{H^{G^F}-H}&\ge (2^G)^{FH^G}&&\text{by the high-school identity.}\\
    2^{H^{G^F}-H}&> G^{FH^G}&&\text{as $2^G>G$ when $G\ge 2$.}
  \end{align*}
This takes care of the case when $G\ge3$. In the case when $G=2$, we
need to show the following strict inequality:
\[2^{H^{2^F}-H}>2^{FH^2}\]
First, we will establish the inequality
\[2^{2^F-2}-F>1\]
To do so, we note that it holds when $F=2$, and all that is needed,
then, is to establish that the expression $2^{2^F-2}-F$ is monotonic
in $F$ for all $F\ge 2$. Looking at successive differences, we get
\begin{align*}
  \left(2^{2^{F+1}-2}-(F+1)\right)-\left(2^{2^F-2}-F  \right)&=
  2^{2^{F+1}-2}-2^{2^F-2}-1\\
  &=(2^{2^{F+1}}-2^{2^F})\cdot2^{-2}-1\\
  &\ge(2^{2^3}-2^{2^2})\cdot2^{-2}-1 & \text{ by Lemma~\ref{monotone}}\\
  &=2^6-2^2-1>0
\end{align*}
whence the expression is monotonic in $F$. 
We can now complete the argument as follows:
\begin{align*}
  2^{2^F-2}-F&>1&&\text{by the preceding argument.}\\
  H^{2^F-2}-F&>1&&\text{as $H\ge2$.}\\
  H(H^{2^F-2}-F)&>1&&\text{by multiplying with $H$.}\\
  H^{2^F-1}-FH&>1&&\text{by simplification.}\\  
  H^{2^F-1}-1&>FH&&\text{by rearranging.}\\
  H^{2^F}-H&>FH^2&&\text{by multiplying with $H$.}\\
  2^{H^{2^F}-H}&> 2^{FH^2}&&\text{by monotonicity.}
\end{align*}
This concludes the proof.
\end{proof}
Combining the above lemmas, we now get
Theorem~\ref{thm:ruledecreasing} as a straightforward consequence.

Using this theorem, we can prove as an easy corollary that proof
search using the rules of G4ip is terminating. Because none of the
rules increases the value of the interpretation of sequents, and
because the non-invertible rules \emph{strictly decrease} this value,
it follows that the number of non-invertible rules in a derivation is
bounded by a function of the value of the goal sequent. Moreover, one
can show that there is at most a finite number of \emph{invertible}
rules between any two non-invertible rules in a derivation, and the
termination of proof search follows easily. Note that, even in the
case $\bot$ was considered, because the rule for $\bot$ does not have
any premises, its impact on termination would be trivial.

The traditional way of showing G4ip is terminating is also done by
assigning a measure to each sequent, but for this, it is sufficient to
show that the measure decreases along any \emph{branch} of the proof
tree. In our presentation, we have blurred the distinction between the
meta-level conjunction (i.e. multiple premises) and that of the object
level, as motivated by the corresponding equations for exponential
polynomials. 

It is natural to consider whether a similar approach would suffice to show the
termination of other calculi, such as the G3ip calculus. In this calculus, the
$\to_l$ rule has the following form:
\[
\myinfer{(A\to B), \Gamma \vdash C}{(A\to B), \Gamma \vdash A\qquad B, \Gamma \vdash C}
\]
If we assume $a$, $b$, $c$, and $\gamma$ are the interpretations of $A$, $B$, $C$,
and $\Gamma$ respectively, then the inequality interpretation would require that 
\[
c^{b^a\cdot\gamma}>a^{b^a\cdot\gamma}\cdot c^{b\cdot \gamma}
\]
Even if we ignore the second premise, in order to have $c^{b^a\cdot\gamma}$ be strictly
greater than $a^{b^a\cdot\gamma}$, we must have that $c$ is strictly greater than $a$.
If $A$ and $C$ are both atomic, this at the very least means that their
interpretations must be different, and raises the question of how one would even
determine which values should be assigned to the atomic formulas.

However, even a non-uniform interpretation of the atomic formulas would not
suffice because derivations in G3ip may \emph{loop}: in the presence of an
assumption of the form $A\to A$ (or in a more non-trivial case, $A\to B$ and $B\to A$)
a derivation of $\Gamma \to A$ may after a few applications of the $\to_l$ rule return to
the exact same sequent $\Gamma\to A$. In this case, the inequality interpretation
cannot possibly hold.

Finally, let us briefly remark on how to extend the above result to
the HS sequent calculus. The first step is to note that the
normalization functions shown in Figure~\ref{fig:norm} all preserve
the value of the interpretation. Thus, all that is needed is to show
that the inference rules must strictly decrease the associated values.
In this case, however, the necessary inequalities are exactly the ones
we established previously, and as the HS sequent calculus only has
non-invertible rules, termination of proof search is immediate. Note that this again
requires all occurrences of $\top$ to have been simplified away.

%%% Local Variables:
%%% mode: latex
%%% TeX-master: "high-school"
%%% End:

\section{An Intuitionistic Arithmetical Hierarchy}
\label{sec:hierarchy}

In classical first-order logic, every formula is equivalent to a
formula in prenex normal form. This is possible thanks to the
classical tautologies (for $G$ such that $x\not\in \FV(G)$),
\begin{align*}
  \forall x F \vee G &\leftrightarrow \forall x(F \vee G) &
  \exists x F \wedge G &\leftrightarrow \exists x(F \wedge G)\\
  \forall x F \wedge G &\leftrightarrow \forall x(F \wedge G) &
  \exists x F \vee G &\leftrightarrow \exists x(F \vee G)\\
  \neg\exists x F &\leftrightarrow \forall x\neg F &
  \neg\forall x F&\leftrightarrow \exists x \neg F,
\end{align*}
that allow pushing the quantifiers to the front of a formula.  In
intuitionistic logic, \emph{half} of these rules are not only
equivalences but even isomorphisms. We can also write them in a more
general form\footnote{The generalization consists in replacing the
  negation $\neg H$ (i.e. the implication $H\to\bot$) by the
  implication $H\to G$, as well as closing $G$ by a universal
  quantifier which in the case $x\not\in\FV(G)$ can be discarded.}:
\begin{align}
  \forall x F \wedge \forall x G &\cong \forall x(F \wedge G)\label{iso:quant:1}\\
  \exists x F \vee \exists x G &\cong \exists x(F \vee G)\label{iso:quant:2}\\
  \exists x F \to G &\cong \forall x (F \to G) & (\text{where } x\not\in \FV(G)).\label{iso:quant:3}
\end{align}

To see why these isomorphisms hold, it is easiest to consider a natural
deduction proof system, when formal proofs are terms of a suitable
typed lambda calculus (see for instance the intuitionistic fragment of
Table~2 from \cite{dns}) and take identity of proofs, $\equiv$, to be
the standard $=_{\beta\eta}$-relation for the lambda calculus with
product types (conjunction) and sum types (disjunction). One also has
terms for $\exists$-introduction ($\wpair{t}{p}$),
$\exists$-elimination ($\dest{p}{x}{b}{q}$), $\forall$-introduction
($\lambda x. p$) and $\forall$-elimination ($p t$), and additional rules
for $\beta$- and $\eta$-equality of terms for the quantifiers,
\begin{align*}
  (\lambda x. p) t &=_\beta p \{t/x\} \\
  \dest{\wpair{t}{p}}{x}{a}{q} &=_\beta q\{t/x\}\{p/a\}\\
  p &=_\eta \lambda x. p x\\
  p\{q/a\} &=_\eta \dest{q}{x}{b}{p\{\wpair{x}{b}/a\}},
\end{align*}
that are analogues of the $\beta$- and $\eta$-rules concerning
function types and sum types.
Given this notation, for instance, the isomorphism
\[\exists x F \to G \cong \forall x (F \to G)\] can be established
using two proof terms,
\begin{gather}
  \tag{$\phi$} \lambda a. \lambda x. \lambda b. a \wpair{x}{b}\\
  \tag{$\psi$} \lambda c. \lambda d. \dest{d}{y}{e}{c y e},
\end{gather}
by showing that $\lambda c. \phi(\psi c) =_{\beta\eta} \lambda c. c$ and $\lambda a. \psi(\phi a) =_{\beta\eta} \lambda a. a$:
\begin{multline*}
  (\lambda a. \lambda x. \lambda b. a \wpair{x}{b}) (\lambda d. \dest{d}{y}{e}{c y e}) =_{\beta}\\
  \lambda x. \lambda b. \dest{\wpair{x}{b}}{y}{e}{c y e}   =_{\beta}
  \lambda x. \lambda b. c x b   =_{\eta} c
\end{multline*}
\begin{multline*}
  (\lambda c. \lambda d. \dest{d}{y}{e}{c y e}) (\lambda x. \lambda b. a \wpair{x}{b}) =_{\beta}\\
  \lambda d. \dest{d}{y}{e}{a \wpair{y}{e}} =_{\eta}
  \lambda d. (a a_0) \{d / a_0\} =
  \lambda d. a d =_\eta a
\end{multline*}

\vfill

Similarly, we can show that a further formula isomorphism holds,
\begin{equation}
  \label{iso:quant:4}
  G\to \forall x F \cong \forall x (G \to F),
\end{equation}
when $x\notin\FV(G)$. Namely, one can take as witnessing terms the following ones:
\begin{gather}
  \tag{$\phi$} \lambda a. \lambda x. \lambda b. a b x\\
  \tag{$\psi$} \lambda c. \lambda d. \lambda x. c x d.
\end{gather}

Given the first-order formula isomorphisms (\ref{iso:quant:1}),
(\ref{iso:quant:2}), (\ref{iso:quant:3}), and (\ref{iso:quant:4}), we
shall now adopt an extended exponential polynomial notation of
formulas involving quantifiers. We write $\exists x F$ as $x F$ and
$\forall x F$ as $F^x$, the distinction between conjunctions and
existential quantifiers, and implications and universal quantifiers,
being made by a variable convention: we ``left-multiply'' and
``exponentiate'' by the lowercase $x, y, z$ in order to express
quantifiers, while if we do it with uppercase $F, G$, it means that we
are making a conjunction and implication with a generic formula.
Using this notation, the isomorphisms
(\ref{iso:quant:1})-(\ref{iso:quant:4}) acquire the form of the
following equations:
\begin{align}
  (F G)^x &= F^x G^x\tag{\ref{iso:quant:1}'}\label{iso:quant:1'}\\
  x(F+G) &= xF + xG\tag{\ref{iso:quant:2}'}\label{iso:quant:2'}\\
  G^{xF} &= (G^F)^x & (\text{where } x\notin\FV(G))\tag{\ref{iso:quant:3}'}\label{iso:quant:3'}\\
  (F^x)^G &= (F^G)^x & (\text{where } x\notin\FV(G))\tag{\ref{iso:quant:4}'}\label{iso:quant:4'}
\end{align}

This extension of HSI with rules involving the \emph{extended}
exponential polynomials thus still implies formula isomorphism. And
now we can give an interpretation of the invertible proof rules
involving the quantifiers that respect this notation:
\begin{align}
  \tag{$\forall_r$}\myinfer{\Gamma\vdash\forall x F}{\Gamma\vdash F} &&& \myinfer{(F^x)^\Gamma}{(F^\Gamma)^x} & \text{ for all } x\notin\FV(\Gamma)\ \\
  \tag{$\exists_l$}\myinfer{\exists x F, \Gamma \vdash G}{F, \Gamma \vdash G} &&& \myinfer{G^{xF \Gamma}}{\left(G^{F\Gamma}\right)^x} & \text{ for all } x\notin\FV(G,\Gamma).
\end{align}
As the invertible rules are equalities, an extension of HS from
Section~\ref{sec:highschool} for the first-order case can be defined
in the same way as before, by applying a normalization function (see
Figure~\ref{fig:norm:quantifiers}) to the premises and conclusions of
the non-invertible rules for quantifiers. Working with the first-order
extension G4i~\cite{DyckhoffNegri} of G4ip, one would have the HS
variants of the rules ($L\forall$), ($R\exists$),
($L\forall\!\!\supset$), while the invertible rule
($L\exists\!\!\supset$) can be handled using the isomorphism
(\ref{iso:quant:3}).

\begin{figure}
% \small
  \centering
    \begin{align*}
      \Base \ni b &::= p ~|~ d &
                                 \CNF \ni c &::= 1 ~|~ \left(b^{c_1}\right)^{x_1} c_2\\
      \DNF \ni d &::= c_1 \plus c_2 ~|~ c \plus d ~|~ x c & 
                                                    \ENF \ni e &::= c ~|~ d
    \end{align*}
    \begin{align*}
      \ntimes{-}{-} &: \CNF \to \CNF \to \CNF\\
      \ntimes{1}{c_2} &:= c_2\\
      \ntimes{\left(b^{c_{11}}\right)^{x}c_{12}}{c_2} &:= \left(b^{c_{11}}\right)^{x} (\ntimes{c_{12}}{c_2})\\[1em]
      \qexplogone{-}{-}{-} &: \Base \to \Vars \to \ENF \to \CNF\\
      \qexplogone{b}{((y c)^1 1)}{x} &:= \left(b^c\right)^{x,y} 1\\
      \qexplogone{b}{c}{x} &:= \left(b^c\right)^x 1\\
      \qexplogone{b}{(c_1  \plus  c_2)}{x} &:= \ntimes{((b^{c_1})^x 1)}{((b^{c_2})^x 1)}\\
      \qexplogone{b}{(c_1  \plus  d_2)}{x} &:= \ntimes{((b^{c_1})^x 1)}{(\qexplogone{b}{d_2}{x})}\\[1em]
      \explog{-}{-} &: \CNF \to \ENF \to \CNF\\
      \explog{1}{e_2} &:= 1\\
      \explog{(b^{c_{11}})^x c_{12}}{e_2} &:= \ntimes{(\qexplogone{b}{(\distribone{c_{11}}{e_2})}{x})}{(\explog{c_{12}}{e_2})}\\[1em]
      \distribex{-}{-} &: \Vars \to \ENF \to \DNF\\
      \distribex{x}{c} &:= x c\\
      \distribex{x}{(c_1\plus c_2)} &:= ((x c_1)^1)^\epsilon 1 \plus ((x c_2)^1)^\epsilon 1\\
      \distribex{x}{(c_1\plus d_2)} &:= ((x c_1)^1)^\epsilon 1 \plus \distribex{x}{d_2}\\[1em]
      \explogall{-}{-} &: \CNF \to \Vars \to \CNF\\
      \explogall{1}{x} &:= 1\\
      \explogall{((b^{c_1})^y c_2)}{x} &:= (b^{c_1})^{y,x} (\explogall{c_2}{x})
    \end{align*}
    
    \begin{align*}
      \enf{-} &: \Formula \to \ENF & \enfpos{-} &: \Formula \to \CNF\\
      \enf{p} &:= (p^1)^\epsilon 1 & \enfpos{p} &:= (p^1)^\epsilon 1\\
      \enf{F \vee G} &:= \nplus{\enf{F}}{\enf{G}} & \enfpos{F \vee G} &:= ((\nplus{\enfpos{F}}{\enfpos{G}})^1)^\epsilon 1\\
      \enf{F \wedge G} &:= \distrib{\enf{F}}{\enf{G}} & \enfpos{F \wedge G} &:= \ntimes{\enfpos{F}}{\enfpos{G}}\\
      \enf{F \to G} &:= \explog{\enfpos{G}}{\enf{F}} & \enfpos{F \to G} &:= \explog{\enfpos{G}}{\enf{F}}\\
      \enf{\exists x F} &:= \distribex{x}{\enf{F}} & \enfpos{\exists x F} &:= ((x\enfpos{F})^1)^\epsilon 1\\
      \enf{\forall x F} &:= \explogall{\enfpos{F}}{x} & \enfpos{\forall x F} &:= \explogall{\enfpos{F}}{x}
    \end{align*}

    ~
    
    $\Vars$ is the set of finite lists of first-order variables,
    denoted by $x$ or $y$; the empty list is denoted by $\epsilon$ and
    the concatenation of two lists of variables is denoted by a comma:
    $x,y$.

    \caption{Extension of the normalization functions from
      Figure~\ref{fig:norm} for the quantifiers; $x c$ denotes the
      existential quantifier $\exists x c$, while $(b^c)^x$ denotes
      the combination of the universal quantifier and implication
      $\forall x (c\to b)$; the new operations $\distribex{}{}$ and
      $\explogall{}{}$ are analogous to the operations $\distrib{}{}$
      and $\explog{}{}$ but deal with first-order variables and
      correspond to the pushing-in of quantifiers from isomorphisms
      (\ref{iso:quant:2'}) and (\ref{iso:quant:1'}.}
  \label{fig:norm:quantifiers}
  % \vspace*{-2em}
\end{figure}

We have not pursued formally showing an extension of
Lemma~\ref{lem:equations}, mostly for technical reasons having to do
with formalizing syntax with binders.\footnote{Apart from the fact
  that we do not use it for formal proofs, the Coq definition of the
  functions from Figure~\ref{fig:norm:quantifiers} is simple and
  can be used to compute the formula normal form.} Hence, we do not propose to establish formally a
first-order analogue of Theorem~\ref{thm:embed} here.

What we consider as a more important consequence of the extended
exponential polynomial interpretation of the quantifiers, is the fact
that it leads to a normal form theorem for intuitionistic (minimal)
first-order formulas, which can be thought of as an analogue of the
prenex normal form for classical first-order logic, but which is
obtained by sometimes pushing the quantifiers \emph{in} (following
(\ref{iso:quant:1'}) and (\ref{iso:quant:2'})) and sometimes pushing
the quantifiers \emph{out} (following (\ref{iso:quant:3'}) and
(\ref{iso:quant:4'})), rather than always pushing them out as in the
approach for classical logic. Moreover, turning a formula into our
normal form preserves isomorphism between the original formula and its
normal form, and hence also equivalence and proof identity.

\begin{theorem}
  For every first-order formula $F$, there is an isomorphic formula
  $e\in \DNF\cup\CNF$, where the classes $\DNF$, $\CNF$, and
  $\Base$ are defined simultaneously as follows,
  \begin{align*}
    \DNF\ni d &::= c_1 \vee \cdots \vee c_n & (n\ge 2)\ \\
    \CNF\ni c &::= \forall x_1 (c_1\to b_1) \wedge \cdots \wedge \forall x_n (c_n\to b_n) & (n\ge 0)\\
    \Base \ni b &::= p ~|~ d ~|~ \exists x c,
  \end{align*}
  where $p$ denotes a prime formula, and $x$ and $x_i$ lists of
  first-order variables (potentially empty).
\end{theorem}
\begin{proof} Given $F$, the required $e$ is $\enf{F}$.  The
  normalization procedure defined in Figure~\ref{fig:norm} and
  Figure~\ref{fig:norm:quantifiers} is structurally recursive, hence
  terminating, and with range $\DNF\cup\CNF$.

  The isomorphism follows in the same way as in
  Theorem~\ref{thm:propositional:hierarchy}, taking into account the
  additional isomorphisms proven in this section for
  (\ref{iso:quant:1}), (\ref{iso:quant:2}), (\ref{iso:quant:3}), and
  (\ref{iso:quant:4}), i.e. the additional high-school identities
  (\ref{iso:quant:1'}), (\ref{iso:quant:2'}), (\ref{iso:quant:3'}),
  and (\ref{iso:quant:4'}).
\end{proof}

Since $\DNF\subset\Base$, we can also present the normal form in a way
analogous to the classical arithmetical hierarchy.

\begin{definition} The \emph{intuitionistic formula hierarchy} is
  defined by the following mutually-inductive definition of formula
  classes $\SSigma{}$ and $\PPi{}$,
  \begin{align*}
    \PPi{}\ni c &::= \forall x_1 (c_1\to b_1) \wedge \cdots \wedge \forall x_n (c_n\to b_n) & (n\ge 0)\\
    \SSigma{}\ni b &::= p ~|~ c_1 \vee \cdots \vee c_n ~|~ \exists x c & (n\ge 2),
  \end{align*}
  or, in extended exponential polynomial notation,
  \begin{align*}
    \PPi{}\ni c &::= \prod_{i=1}^{n\ge 0} \left(b_i^{c_i}\right)^{x_i} &
    \SSigma{}\ni b &::= p ~|~ \sum_{i=1}^{n\ge 2} c_i ~|~ x c,
  \end{align*}
  where $p$ denotes a prime formula, and $x$ and $x_i$ lists of
  first-order variables (potentially empty).
\end{definition}

The name ``hierarchy'' is justified by the fact that the classes
$\SSigma{}$ and $\PPi{}$ are interleaved, being defined by a truly
mutual inductive definition. A lower level of a class is embedded in
the immediately succeeding higher level by the formula transformations
$b\mapsto (b^1)^{\epsilon}$ and $c\mapsto \epsilon c$, which are
clearly isomorphisms (recall that $\epsilon$ denotes the empty list of
first-order variables). It should also be clear that a higher level
cannot be embedded in a lower one, but, in order to make these
statements more precise, we define a particular linearization of the
hierarchy.

\begin{definition}
  The \emph{intuitionistic arithmetical hierarchy} is defined from the
  intuitionistic formula hierarchy by assigning levels,
  $\SSigma{n}, \PPi{n}$, for $n\in\mathbb{N}$, to the formula classes
  $\SSigma{}$ and $\PPi{}$, in the following way:
  \begin{align*}
    \PPi{0}\ni c &::= p^11 & p\text{ is a prime formula }\\
    \SSigma{0}\ni b &::= p & p\text{ is a prime formula }\\
    \PPi{n+1}\ni c &::= \prod_{i=1}^{m\ge 0} \left(b_i^{c_i}\right)^{x_i} & n= \max_{i=1}^m\{k ~|~ b_i\in\SSigma{k}\}\\
    \SSigma{n+1}\ni b &::= \sum_{i=1}^{m\ge 2} c_i ~|~ x c & n= \max_{i=1}^m\{k ~|~ c_i\in\PPi{k}\} \text{ or } c\in\PPi{n}.
  \end{align*}
  We also extend the relation ``$\in$'' from formulas satisfying the
  inductive definition to all formulas, in the following way:
  \begin{itemize}
  \item $F\in\PPi{n}$ iff $\enf{F}\in\PPi{n}$,
  \item $F\in\SSigma{n+1}$ iff $\enf{F}\in\SSigma{n+1}$.
  \end{itemize}
\end{definition}
\begin{remark}\label{rem:cibi}
  Note, that, when determining the level $n$ for the class $\PPi{n}$,
  we do not take into account the level of the sub-formulas $c_i$ from
  $\forall x_1 (c_1\to b_1) \wedge \cdots \wedge \forall x_n (c_n\to
  b_n)$, but only the level of the sub-formulas $b_i$.
\end{remark}
\begin{remark}
  Although, due to the syntactic nature of the previous definition,
  the direct inclusions $\SSigma{n}\subseteq\SSigma{n+1}$ and
  $\PPi{n}\subseteq\PPi{n+1}$ do not hold, we do have the following:
  \begin{align*}
    b\in\SSigma{n} &\text{ implies that there exists } b'\in\SSigma{n+1} \text{ such that } b\cong b'; \text{ and }\\
    c\in\PPi{n} &\text{ implies that there exists } c'\in\PPi{n+1} \text{ such that } c\cong c'.
  \end{align*}
  This is proved by induction on $n$, simultaneously for the two
  statements, as follows.

  Base case. If $b\in\SSigma{0}$, then $b$ is of form $p$, hence
  $b' := \epsilon (p^1 1) \cong b$ and $b'\in \SSigma{1}$ since
  $p^1 1\in\PPi{0}$. If $c\in\PPi{0}$, then $c$ is of form $p^11$,
  hence $c':=c\in\PPi{1}$.

  Induction case. If $b\in\SSigma{n+1}$, then $b$ is either of form
  $c_1+\cdots+c_m$ or $x c$, with
  $n= \max_{i=1}^m\{k ~|~ c_i\in\PPi{k}\}$ or $c\in\PPi{n}$. By the
  induction hypothesis, we find one $c_i'\in\PPi{n+1}$ such that
  $c_i\cong c_i'$, or $c\in\PPi{n+1}$ such that $c\cong c'$. Then,
  $c_1+\cdots+c_{i-1}+c_i'+c_{i+1}+\cdots+c_m$ or $x c'$ belongs to
  $\SSigma{n+2}$, and
  $c\cong c_1+\cdots+c_{i-1}+c_i'+c_{i+1}+\cdots+c_m$ or
  $c \cong x c'$. If $c\in\PPi{n+1}$, then $c$ is of form
  $\prod_{i=1}^{m\ge 0} \left(b_i^{c_i}\right)^{x_i}$ with
  $n= \max_{i=1}^m\{k ~|~ b_i\in\SSigma{k}\}$. By the induction
  hypothesis, we find one $b_i'\in\SSigma{n+1}$ such that
  $b_i\cong b_i'$. Then,
  $c\cong\left(b_1^{c_1}\right)^{x_1}\cdots\left(b_{i-1}^{c_{i-1}}\right)^{x_{i-1}}\left((b_i')^{c_i}\right)^{x_i}\left(b_{i+1}^{c_{i+1}}\right)^{x_{i+1}}\cdots\left(b_m^{c_m}\right)^{x_m}\in\PPi{n+2}$.
  
\end{remark}

% $\SSigma{0}$ and $\PPi{0}$ is essentially the same class of formulas. So, i

We shall now show a connection between the intuitionistic arithmetical
hierarchy and the classical arithmetical hierarchy. From this
connection, it will follow that the intuitionistic hierarchy is
proper, that is, that the interleaving of the classes $\SSigma{}$ and
$\PPi{}$ is such that each class is properly extended by the next one
-- the hierarchy does not collapse.

\subsection{Properness of the intuitionistic hierarchy via a relation
  to the classical one}

Recall the definition of the classes $\Sigma^0_n,\Pi^0_n$ of the
classical arithmetical hierarchy\footnote{To distinguish easily
  between the classical and the intuitionistic hierarchy, the levels
  of the classical hierarchy are denoted in regular face and with a
  $0$-superscript, while the intuitionistic ones are in bold face and
  have no superscript.} (see for instance Section~2.3 of
\cite{SchwichtenbergW2011}):
\begin{itemize}
\item $F\in\Pi^0_{n+1}$ iff $F$ is classically equivalent to a formula
  of the form $\forall x G$ where $G\in\Sigma^0_n$,
\item $F\in\Sigma^0_{n+1}$ iff $F$ is classically equivalent to a
  formula of the form $\exists x G$ where $G\in\Pi^0_n$,
\item $F\in\Sigma^0_0$ and $F\in\Pi^0_0$ iff $F$ is classically
  equivalent to an elementary relation $E$.
\end{itemize}
Note that, in the classical case, a basic theory of arithmetic is
assumed on top of classical first-order logic, a theory able to code
syntax and define Kleene's $T$ predicate, in order to show that the
hierarchy classifies all formulas and that its levels are properly
increasing. In the intuitionistic case, we did not need to assume a
theory, working with pure intuitionistic first-order logic. We will
assume now, however, that every elementary relation $E$ is given by an
atomic predicate with the same name $E$.

Say that a formula $F$ is \emph{classically represented in
  $\SSigma{n}$ (or $\PPi{n}$)} when there is a formula
$F'\in\SSigma{n}$ (or $\PPi{n}$) such that $F$ and $F'$ are
\emph{classically} equivalent. This means that $F$ itself is not
necessarily in $\SSigma{n}$ or $\PPi{n}$ (because $\enf{F}$ is not
there for the specific $n$), but that $F$ is classically equivalent to
a formula $F'$ such that $\enf{F'}$ is in $\SSigma{n}$ or
$\PPi{n}$. From this discussion, it is clear that we do not get that
$\Sigma^0_n\subseteq \SSigma{n}$ or
$\Pi^0_n\subseteq\PPi{n}$. However, we do get the following
related proposition.
\begin{proposition}\label{prop:classrep}
  Every formula of the classical arithmetical hierarchy is classically
  represented at the corresponding level in the intuitionistic
  arithmetical hierarchy. That is: if $F\in\Sigma^0_n$, then $F$ is
  classically represented in $\SSigma{n}$; if $F\in\Pi^0_n$, then $F$
  is classically represented in $\PPi{n}$.
\end{proposition}
\begin{proof}
  If $F\in \Sigma^0_0$ (i.e., $F\in \Pi^0_0$), then $F$ is classically
  equivalent to a predicate $E$ (and $E^1 1$), and, since $E$ is
  prime, $E\in\SSigma{0}$ (and $E^1 1\in\PPi{0}$) and hence $F$ is
  classically represented in $\SSigma{0}$ and $\PPi{0}$.

  If $F\in \Pi^0_{n+1}$, then $F$ is classically equivalent to a
  formula of form $\forall x G$, where $G\in\Sigma^0_n$. Assuming that
  $G$ is classically represented by $G'$ in $\SSigma{n}$ (induction
  hypothesis), then $F$ is classically represented in $\PPi{n+1}$,
  because $\forall x G$ is classically equivalent to
  $\forall x (\top\to \enf{G'})$ which is in $\PPi{n+1}$.

  If $F\in \Sigma^0_{n+1}$, then $F$ is classically equivalent to a
  formula of form $\exists x G$, where $G\in\Pi^0_n$. Assuming that
  $G$ is classically represented by $G'$ in $\PPi{n}$ (induction
  hypothesis), then $F$ is classically represented in $\SSigma{n+1}$,
  because $\exists x G$ is classically equivalent to
  $\exists x \enf{G'}$ which is in $\SSigma{n+1}$.
\end{proof}
\begin{remark}\label{rem:prenex}
  As can be seen from the proof of the previous proposition, a formula
  of the classical arithmetical hierarchy is actually classically
  represented in the intuitionistic hierarchy essentially by its
  representation in the \emph{classical} hierarchy. For example, if
  $F\in \Pi^0_4$, then $F$ is classically equivalent to a formula of
  form $\forall x \exists y \forall z \exists u E$, which is in
  $\PPi{4}$ because
  $\enf{\forall x \exists y \forall z \exists u E} = \forall x
  (\top\to\exists y \forall z (\top\to\exists u E))$.  In other words,
  formulas in prenex normal form and with alternating quantifiers are
  represented by (essentially) themselves in both the classical and
  the intuitionistic hierarchy.
\end{remark}

As for the plain inclusion of the intuitionistic arithmetical
hierarchy in the classical one, $\SSigma{n}\subseteq\Sigma^0_{n}$ and
$\PPi{n}\subseteq\Pi^0_{n}$, it does not follow either: although
isomorphism implies intuitionistic equivalence and hence classical
equivalence, because of Remark~\ref{rem:cibi}, we cannot prove the
inclusion $\PPi{n}\subseteq\Pi^0_{n}$ (and hence
$\SSigma{n}\subseteq\Sigma^0_{n}$) in general. However, for the class
of formulas in the prenex normal form
$\QQ_1 x_1\QQ_2 x_2\cdots\QQ_n x_n P$, for $\QQ_i\in\{\forall,\exists\}$,
$\QQ_{i}\neq\QQ_{i+1}$, and $P$ a prime formula, the inclusion does
hold.

\begin{proposition}\label{prop:prenex}
  Let $F$ be in the prenex normal form
  $\QQ_n x_n\QQ_{n-1}x_{n-1}\cdots\QQ_1 x_1 P$ with alternating quantifiers
  (i.e., $\QQ_i\neq\QQ_{i+1}$ for all $i$). Then: if $F\in\SSigma{n}$,
  then $F\in\Sigma^0_{n}$; if $F\in\PPi{n}$, then $F\in\Pi^0_{n}$.
\end{proposition}
\begin{proof}
  We first prove that, given a formula $G$ such that
  $\enf{G}\in\PPi{}$ (i.e., in $\CNF$), we have that
  $\enf{G}=\enfpos{G}$.

  The proof is by induction on the structure of $G$. Because
  $\enf{G}\in\CNF$, the cases where $G$ is a disjunction or an
  existential quantifier are not possible. For the base case $G = p$,
  and the induction cases $G = G_1\to G_2$ and $G = \forall x G_1$,
  the definitions of $\enf{}$ and $\enfpos{}$ are identical, so there
  is even no need to invoke the induction hypothesis. There is only
  one induction case left to treat.
  \begin{description}
  \item[$G = G_1\wedge G_2$] Note that both $\enf{G_1}$ and
    $\enf{G_2}$ must be in $\CNF$, because otherwise, by the
    definition of $\distrib{}{}$, $\enf{G}$ would not have been in
    $\CNF$. We have
    $\distrib{\enf{G_1}}{\enf{G_2}} =
    \ntimes{\enfpos{G_1}}{\enfpos{G_2}}$ by the induction hypothesis 
    and because $\distrib{}{} = \ntimes{}{}$ when both arguments of
    $\distrib{}{}$ belong to $\CNF$.
  \end{description}

  Now, we can prove the proposition, by well-founded induction on
  $n\ge 0$:
  \begin{description}
  \item[base case]   In the base case, $F$ is of form $P$, and we have
    $P\in\SSigma{0}\cap\Sigma^0_0\cap\PPi{0}\cap\Pi^0_0$.
  \item[induction case] Suppose that the proposition holds for $n$ and
    that we want to show it for $n+1$. We consider the two possible
    cases for the quantifier $\QQ_{n+1}$.
    \begin{enumerate}
    \item Suppose $\exists x_{n+1} G\in\SSigma{n+1}$, i.e.,
      $\enf{\exists x_{n+1} G}\in\SSigma{n+1}$. We have
      $\enf{\exists x_{n+1} G}=\distribex{x_{n+1}}{\enf{G}}=\exists
      x_{n+1}\enf{G}$, by the definition of $\distribex{}{}$, since
      $\enf{G}\in\CNF$ because $G$ is either $P$ or starts with a
      universal quantifier. From
      $\exists x_{n+1}\enf{G}\in\SSigma{n+1}$, we have that
      $\enf{G}\in\PPi{n}$, so $G\in\PPi{n}$ and, by induction
      hypothesis, $G\in \Pi^0_n$. Therefore
      $\exists x_{n+1} G\in\Sigma^0_{n+1}$.
    \item Suppose $\forall x_{n+1} G\in\PPi{n+1}$, i.e.,
      $\enf{\forall x_{n+1} G}\in\PPi{n+1}$. $G$ is either $P$ or
      starts with an existential quantifier.  If $G=P$, then
      $\enf{\forall x_{n+1} G} = \explogall{\enfpos{G}}{x_{n+1}} =
      \forall x_{n+1}(\top\to P) \in \PPi{n+1}$ implies
      $P\in\SSigma{n}$ (in this case, $n=0$). If
      $G = \exists x_n H$, then
      $\enf{\forall x_{n+1} G} = \explogall{\enfpos{G}}{x_{n+1}} =
      \forall x_{n+1}(\top\to \exists x_n \enfpos{H}) = \forall
      x_{n+1}(\top\to \exists x_n \enf{H}) \in \PPi{n+1}$ implies
      $\enf{H}\in\PPi{n-1}$ i.e., $H\in\PPi{n-1}$; here we were able
      to use the equation $\enf{H}=\enfpos{H}$, because
      $\enf{H}\in\CNF$ since $H$ is either $P$ or starts with a
      universal quantifier.

      We have got either $P\in\SSigma{n}$ or $H\in\PPi{n-1}$ and, by
      using the induction hypothesis, we get either
      $\forall x_{n+1}P \in\Pi^0_{n+1}$ or
      $\forall x_{n+1}\exists x_{n} H\in\Pi^0_{n+1}$.
    \end{enumerate}
  \end{description}
\end{proof}

We can now prove that the intuitionistic arithmetical hierarchy, and
hence the more general intuitionistic formula hierarchy, is proper.

\begin{corollary} For $n\ge 0$,
  $\SSigma{n}\subsetneq\SSigma{n+1}$,
  $\SSigma{n}\subsetneq\PPi{n+1}$,
  $\PPi{n}\subsetneq\SSigma{n+1}$, and
  $\PPi{n}\subsetneq\PPi{n+1}$.
\end{corollary}
\begin{proof}
  We will prove only one of the statements,
  $\SSigma{n}\subsetneq\PPi{n+1}$, as the proofs of the other three
  are analogous.

  Let $F\in \Pi^0_{n+1}\setminus\Sigma^0_n$. Such a formula exists by
  the properness of the classical arithmetical hierarchy.\footnote{The
    proofs are constructive, that is, there are explicit such formulas
    -- again, one can see Section~2.3 of \cite{SchwichtenbergW2011}
    for concrete examples.}  By Proposition~\ref{prop:classrep}, $F$
  is classically represented in $\PPi{n+1}$ by a formula
  $F'\in\PPi{n+1}$. This $F'$ will be our example of a formula in
  $\PPi{n+1}\setminus\SSigma{n}$.

  We also know that $F\not\in\Sigma^0_n$ and hence
  $F'\not\in\Sigma^0_n$. Since by the proof of
  Proposition~\ref{prop:classrep} $F'$ is in prenex form and with
  alternating quantifiers (see Remark~\ref{rem:prenex}), from
  Proposition~\ref{prop:prenex}, we have that $F'\not\in\SSigma{n}$
  (we used the (constructive) contraposition of ``$F'\in\SSigma{n}$
  implies $F'\in\Sigma^0_n$'').
\end{proof}

\subsection{Examples of using the intuitionistic hierarchies}

In this last subsection, we give two examples that show our
hierarchies at work. In the first example, we categorize the formulas
of the well known Nishimura lattice.  In the second one, we show how
the hierarchy allows one to simplify definitions that are stated by
induction on the structure of formulas, concretely the double-negation
translation. The first example is an application of the intuitionistic
arithmetical hierarchy (it's propositional fragment), while the second
is an application of the \emph{formula} hierarchy directly.

\begin{example}[Nishimura lattice]
  The \emph{basic formulas of the Nishimura
    lattice}~\cite{Nishimura1960,sep-logic-intuitionistic} is the
  following sequence of formulas:
  \begin{align}
    F_{\infty}&:= 1 & F_0&:=\bot & F_1&:=p & F_2&:=\bot^p & F_{2n+3} &:= F_{2n+1}+F_{2n+2} & F_{2n+4} &:= (F_{2n+1})^{F_{2n+3}},\label{nishimura:formulas}
  \end{align}
  for $n\ge 0$.

  This sequence characterizes all propositional formulas of
  intuitionistic logic \emph{that use at most one prime formula, $p$}, in
  the following sense:
  \begin{itemize}
  \item any formula is equivalent to one of the basic formulas;
  \item no two of the basic formulas are intuitionistically equivalent to each other;
  \item only the following intuitionistic implications between basic formulas hold
    (and the intuitionistic implications that follow from them):
    \begin{align*}
      F_{2n} &\to F_{2n+1}&
                            F_{2n+1} &\to F_{2n+3}&
                                                    F_{2n+1} &\to F_{2n+4}&
                                                                            F_0&\to F_2.
    \end{align*}
  \end{itemize}
  So, for the case of propositional intuitionistic logic restricted to
  one prime formula $p$, the basic formulas of the Nishimura lattice
  form a kind of a hierarchy. In the following diagram, we classify
  them in our intuitionistic arithmetical hierarchy, while at the same
  time marking the implications that hold between the basic formulas.
  \begin{center}
    \begin{tikzpicture}[node distance=2cm]
      \node(1a){$F_0$};
      \node(1b)[below of=1a]{$F_1$};
      \path[->][->] (1a) edge (1b);
      \node(2a)[right of=1a]{$F_2$};
      \node(2b)[right of=1b]{$F_4$};
      \path[->] (1a) edge (2a);
      \path[->] (1b) edge (2b);
      \node(3a)[right of=2a]{$F_3$};
      \node(3b)[right of=2b]{$F_5$};
      \path[->] (2a) edge (3a);
      \path[->] (2b) edge (3b);
      \path[->] (1b) edge (3a);
      \path[->] (3a) edge (3b);
      \node(4a)[right of=3a]{$F_6$};
      \node(4b)[right of=3b]{$F_8$};
      \path[->] (3a) edge (4a);
      \path[->] (3b) edge (4b);
      \node(5a)[right of=4a]{$F_7$};
      \node(5b)[right of=4b]{$F_9$};
      \path[->] (4a) edge (5a);
      \path[->] (4b) edge (5b);
      \path[->] (3b) edge (5a);
      \path[->] (5a) edge (5b);
      \node(6a)[right of=5a]{$F_{10}$};
      \node(6b)[right of=5b]{$F_{12}$};
      \path[->] (5a) edge (6a);
      \path[->] (5b) edge (6b);
      \node(7a)[right of=6a]{$F_{11}$};
      \node(7b)[right of=6b]{$F_{13}$};
      \path[->] (6a) edge (7a);
      \path[->] (6b) edge (7b);
      \path[->] (5b) edge (7a);
      \path[->] (7a) edge (7b);
      \node(8a)[right of=7a, node distance=0.8cm]{$\cdots$};
      \node(8b)[right of=7b, node distance=0.8cm]{$\cdots$};

      \node(1c)[below of=1b, node distance=1.0cm]{$\SSigma{0},\PPi{0}$};
      \node(2c)[below of=2b, node distance=1.0cm]{$\PPi{1}$};
      \node(3c)[below of=3b, node distance=1.0cm]{$\SSigma{2}$};
      \node(4c)[below of=4b, node distance=1.0cm]{$\PPi{3}$};
      \node(5c)[below of=5b, node distance=1.0cm]{$\SSigma{4}$};
      \node(6c)[below of=6b, node distance=1.0cm]{$\PPi{5}$};
      \node(7c)[below of=7b, node distance=1.0cm]{$\SSigma{6}$};

      \node(12a)[above right of=1a, node distance=1.41cm]{};
      \node(12b)[below right of=1b, node distance=1.41cm]{};
      \path[dotted] (12a) edge (12b);
      \node(22a)[above right of=2a, node distance=1.41cm]{};
      \node(22b)[below right of=2b, node distance=1.41cm]{};
      \path[dotted] (22a) edge (22b);
      \node(32a)[above right of=3a, node distance=1.41cm]{};
      \node(32b)[below right of=3b, node distance=1.41cm]{};
      \path[dotted] (32a) edge (32b);
      \node(42a)[above right of=4a, node distance=1.41cm]{};
      \node(42b)[below right of=4b, node distance=1.41cm]{};
      \path[dotted] (42a) edge (42b);
      \node(52a)[above right of=5a, node distance=1.41cm]{};
      \node(52b)[below right of=5b, node distance=1.41cm]{};
      \path[dotted] (52a) edge (52b);
      \node(62a)[above right of=6a, node distance=1.41cm]{};
      \node(62b)[below right of=6b, node distance=1.41cm]{};
      \path[dotted] (62a) edge (62b);
    \end{tikzpicture}    
  \end{center}
  Calculating the level of a formula is an easy application of the
  normalization function $\enf{\cdot}$. Only the propositional part of
  the hierarchy is needed for this example, and therefore the
  normalization procedure from Figure~\ref{fig:norm} suffices. The
  symbol $\bot$ is treated just as any other prime formula by the
  normalization function. As a general rule, we get that:
  \begin{align}\label{nishimura:levels}
    \PPi{n} &\ni F_{2n}, F_{2n+2} & \text{ for odd }n&\ge 1;&
    \SSigma{n} &\ni F_{2n-1}, F_{2n+1} & \text{ for even }n&\ge 2.
  \end{align}
  The characterization of the basic formulas regarding
  (non)-equivalence that was proven by Nishimura provides us with
  examples of:
  \begin{itemize}
  \item pairs of non-equivalent and hence non-isomorphic formulas at
    each of the levels $\SSigma{n}$ ($n$-even) and $\PPi{n}$ ($n$-odd)
    of the intuitionistic arithmetical hierarchy -- showing that each
    of those levels contains at least two formulas (exactly two
    formulas up to intuitionistic equivalence);
  \item examples of formulas for each of
    $\PPi{n+1}\setminus\SSigma{n}$ ($n$-even) and
    $\SSigma{n+1}\setminus\PPi{n}$ ($n$-odd) -- showing that each of
    those levels of the hierarchy is a proper extension of the previous
    one: an example formula cannot be in both levels because, when a
    formula is in a higher level, since it is not intuitionistically
    equivalent to any other basic formula, it cannot be isomorphic to
    any other basic formula, and in particular it cannot be isomorphic
    to a formula at a lower level.
  \end{itemize}
  Note that, although no special interpretation of $\bot$ needs to be
  assumed in order to \emph{classify} the Nishimura formulas in the
  intuitionistic hierarchy, we do have to make the same assumptions as
  \cite{Nishimura1960} for the examples of non-equivalent formulas
  from the previous two bullet points: the assumption of usual
  intuitionistic interpretation of $\bot$ and the assumption of
  formulas being constructed from at most one atom $p$ (besides the
  atom $\bot$).  
\end{example}

\begin{example}[Double-negation translation]
  Consider the double-negation translation $F^\bot$ of formula $F$,
  \begin{align*}
    F^\bot &:= (F_\bot \to \bot)\to \bot\\
    P_\bot &:= P & P\text{-prime}\\
    (F\wedge G)_\bot &:= F_\bot \wedge G_\bot\\
    (F\vee G)_\bot &:= F_\bot \vee G_\bot\\
    (F\to G)_\bot &:= F_\bot \to G^\bot\\
    (\exists x F)_\bot &:= \exists x F_\bot\\
    (\forall x F)_\bot &:= \forall x F^\bot.
  \end{align*}
  This translation is known as the call-by-value translation, a
  version of the Kuroda translation that also works in minimal logic
  (i.e., the $\bot$-elimination rule is not needed to establish the
  soundness of the translation).

  Now consider the following translation, derived from the above one
  by considering only formulas in normal form,
  $e\in\SSigma{}\cup\PPi{}$ -- since any formula can be brought to
  normal form, the translation is applicable to all formulas.
  \begin{align*}
    e^\bot &:= (e_\bot \to \bot)\to \bot\\
    p_\bot &:= p & p\text{-prime}\\
    (\forall x_1 (c_1\to b_1) \wedge \cdots \wedge \forall x_n (c_n\to b_n))_\bot &:= \forall x_1 ({c_1}_\bot\to {b_1}^\bot) \wedge \cdots \wedge \forall x_n ({c_n}_\bot\to {b_n}^\bot)\\
    (c_1 \vee \cdots \vee c_n)_\bot &:= {c_1}_\bot \vee \cdots \vee {c_n}_\bot\\
    (\exists x c)_\bot &:= \exists x c_\bot.
  \end{align*}
  The advantage of using normalized formulas instead of general
  formulas in this definition of double-negation translation is
  double:
  \begin{enumerate}
  \item Instead of considering six general cases when defining
    $(\cdot)_\bot$ in the original double-negation translation, one
    case for each logical connective, we only need to consider four
    specialized cases in our version of the translation; although, one
    does now have to work with a synthetic and ``vectorized'' logical
    connective for the case covering $\forall$ and $\to$;
  \item In our translation, we can get away with not adding a double
    negation after the universal quantifier: this works because
    formulas in normal form are guaranteed to have an implication
    below the universal quantifier and the equivalence between
    $\forall x \neg\neg(F\to\neg\neg G)$ and
    $\forall x (F\to\neg\neg G)$ is provable in intuitionistic
    (minimal) logic.
  \end{enumerate}
\end{example}

%%% Local Variables:
%%% mode: latex
%%% TeX-master: "high-school"
%%% End:

\section{Conclusion}
\label{sec:conclusion}

In this paper, we employ the exponential polynomial aspect of formulas
to study the structure of proofs and formula equivalence, allowing a
fresh perspective on intuitionistic proof theory and a first link
to other areas of computer science and mathematics, which one could
potentially exploit to obtain new results in logic.

Indeed, as we saw, seeing proof rules as relations (inequalities)
between exponential polynomials allowed us to define HS. As far as we
know, this is the first proof formalism for intuitionistic logic that
dispenses with invertible proof rules. We thus believe it to be a
contribution to the study of identity of proofs \cite{Dosen2003}, an
open problem identified already by Kreisel~\cite{Kreisel1971} and
Prawitz~\cite{Prawitz1971}.  Investigating whether HS-notation alone
is enough to define identity of proofs is a topic of future work. In a
related published work~\cite{explog}, we have studied the equational
theory of $=_{\beta\eta}$ for the lambda calculus with sum types
(i.e. intuitionistic natural deduction), after terms are coerced to a
type normal form very similar to the one shown in
Figure~\ref{fig:norm}. A new decomposition of the so far standard
identity of proofs relation of $\beta\eta$-equality is proposed there,
from which one can also see that the permutations of invertible rules
are an obstacle for comparing derivations, and that a natural
deduction calculus is less suitable than a sequent calculus for
studying proof identities.

The idea that invertible proof rules of sequent calculus should be
treated in blocks, inside which the order of application of rules does not
matter, is present in the approach to \emph{focused sequent calculi} such as
Liang and Miller's intuitionistic system LJF~\cite{LiangMiller2007},
inspired by previous work on linear logic by Andreoli
\cite{andreoli}. The difference between our approach and LJF is that
working on the top-most connectives of a sequent (formula) does not
allow one to apply all applicable type isomorphisms as sequent
transformations, and, as a consequence, a focusing proof proceeds in an
alternation of invertible and non-invertible blocks of proof rules --
the invertible rules still being present.

As we have shown, all the rules of the HS calculus are interpreted as strict
inequalities of natural numbers when atoms are instantiated with appropriately
chosen values. Because our exponential polynomials are manifestly monotonic when
interpreted as functions (i.e. if $f(x)$ is an exponential polynomial containing
the variable $x$, and $n\le m$, then $f(n)\le f(m)$), one might consider
allowing the application of the inference rules in Figure~\ref{fig:hs} not only
at the top level of the sequent, but also \emph{deeply} inside the formulas
themselves. This could lead to a \emph{deep inference}~\cite{Guglielmi07} calculus
in the style of G4ip. Moreover, because of the aforementioned monotonicity, it
should be possible to extend the results of Section~\ref{sec:inequality} to
ensure that this calculus is terminating as well. Note that intuitionistic
calculi presented as deep inference systems (see e.g.
\cite{GuenotStrassburger14}) usually have an explicit contraction rule, which
precludes such a termination argument.

Compared to more traditional sequent calculi, HS is maybe closer to
Vorob'ev's original calculus \cite{Vorobev1958}, that contains
distributivity proof rules (i.e.~(\ref{hsi:distr})), than Dyckhoff's
\cite{Dyckhoff1992} and Hudelmaier's \cite{Hudelmaier}, which do not
apply such proof rules.

Furthermore, we showed that the inequality interpretation allows us to
formulate a simple termination argument for proof search in
intuitionistic propositional logic. This could be potentially useful
for automated and inductive theorem proving, but it is not the subject
of this paper to go into details of how to actually perform proof
search.

The formula hierarchy from Section~\ref{sec:hierarchy}
appears to be the first systematic classification of first-order
formulas preserving isomorphism. One could also argue that it is among
the simplest hierarchies for intuitionistic logic so far and analogous
to the classical arithmetical hierarchy. Our hierarchy could also be
used as an alternative one in the context of classical logic, however,
the benefits of preserving isomorphism of formulas in the classical
setting are not clear, as, at this moment, it is not clear whether
there is a meaningful (non-degenerated and non-trivial) notion of proof identity for
classical proof systems (see sections~5 and~6 of \cite{Dosen2003}).

Previous intuitionistic hierarchies that we know of are the ones of
Mints~\cite{SchubertUZ2015,Mints1968}, Leivant~\cite{Leivant1981},
Burr~\cite{Burr2000}, and Fleischmann~\cite{Fleischmann2010}. Mints'
hierarchy of formulas is restricted to ones not containing existential
quantification (in a positive position); given this restriction, every
formula in a level $\Sigma_n$ or $\Pi_n$ of Mints' hierarchy is
classically equivalent to a formula in the classical arithmetical
hierarchy at the corresponding level $\Sigma^0_n$ or $\Pi^0_n$. With
the aim of showing complexity bounds on termination of proof search,
the line of work on Mints' hierarchy has recently been continued by
Schubert, Urzyczyn, and Zdanowski~\cite{SchubertUZ2015}. Leivant
defined formula classes for intuitionistic logic based on
implicational complexity, that is the depth of negative nestings of
implications. Burr proposes a formula class $\Phi_n$, that over
classical logic coincides with the class $\Pi^0_1$ of the arithmetical
hierarchy, however he gives no ``reasonable counterpart'' for the
classes $\Sigma^0_n$ when $n\ge 2$. Fleischmann introduces inductive
operators for universal, $\mathcal{U}(\cdot,\cdot)$, and existential,
$\mathcal{E}(\cdot)$, closure of sets of formulas, showing they can be
used to obtain a number of different hierarchies, one of them
coinciding with Burr's hierarchy, and then uses these operators to
obtain model theoretic preservation theorems. It is not clear how to
obtain our hierarchy using Fleischmann's operators, in the form in
which they are given; also, our hierarchy classifies formulas while
preserving their isomorphism, not only their equivalence.

It might also be interesting to notice a connection with the class of
coherent or geometric formulas~\cite{BezemCoquand}: using our
notation, they can be written in the form
$(x_1 c_1+\cdots+x_n c_n)^{p_1\cdots p_m}\in\CNF$, where $c_i\in\CNF$
do not contain implications (except for trivial implications of
the form $b^1$).

Finally, as we saw in Section~\ref{sec:hierarchy}, the formula
hierarchy gives a normal form for first-order formulas, which can be
used to simplify definitions or arguments that proceed by induction on
the structure of formulas. One could perhaps also see this normal form
as an intuitionistic analogue of the prenex normal form.

%%% Local Variables:
%%% mode: latex
%%% TeX-master: "high-school"
%%% End:

%%%%%% LaTeX end-matter %%%%%%%%%%%%%%%%%%%%%%%%%%%%%%%%%%%%%%%%%%%%%%
% \input{high-school-mlq-end}
\subsection*{Acknowledgement}
This work has been funded by ERC Advanced Grant ProofCert and has
benefited from discussions with our colleagues Zakaria Chihani, Anupam
Das, Nicolas Guenot, and Dale Miller. The remarks of the anonymous
reviewers significantly improved the paper.

%%% Local Variables:
%%% mode: latex
%%% TeX-master: "high-school"
%%% End:

% \bibliographystyle{mlq}
\bibliographystyle{unsrt}
\bibliography{high-school}

%%% Local Variables:
%%% mode: latex
%%% TeX-master: "high-school"
%%% End:

\end{document}